\documentclass[11pt,reqno]{amsart}

%%%% Packages
\usepackage{graphicx}
\usepackage{multicol,multirow}
\usepackage{amsmath,amssymb,amsfonts}
\usepackage{mathtools}
\usepackage{mathrsfs}
\usepackage{amsthm} 
\usepackage[numbers]{natbib}
\usepackage{ifpdf}
\usepackage[T1]{fontenc}
\usepackage{dsfont}
\usepackage{enumerate}
\usepackage[shortlabels]{enumitem}
\usepackage{fullpage}   % makes the margines way smaller
\usepackage{xcolor}
\usepackage[colorlinks=true,
linkcolor=blue,
citecolor=blue,
urlcolor=blue,
pagebackref,
]{hyperref}

\mathtoolsset{showonlyrefs}

\setcounter{section}{0}%{-1}

\newcommand{\R}{\mathbb{R}}
\newcommand{\Z}{\mathbb{Z}}
\newcommand{\N}{\mathbb{N}}

\newcommand{\p}{\textrm{p}}
\newcommand{\vc}[1]{\boldsymbol{#1}}
\newcommand{\ind}{\mathds{1}} 	%Characteristic functions
\newcommand{\Sz}{\mathcal{S}} 	%Schwartz space
\newcommand{\Mod}[1]{\;(\textrm{mod } #1)}

%for drafts

\DeclarePairedDelimiter\abs{\lvert}{\rvert}
\DeclarePairedDelimiter\norm{\lVert}{\rVert}

\DeclarePairedDelimiter\floor{\lfloor}{\rfloor}

\DeclareMathOperator{\supp}{supp}
\DeclareMathOperator{\BigO}{\mathcal{O}}

%% code from mathabx.sty and mathabx.dcl  ---  
%To define \widecheck
\DeclareFontFamily{U}{mathx}{\hyphenchar\font45}
\DeclareFontShape{U}{mathx}{m}{n}{
      <5> <6> <7> <8> <9> <10>
      <10.95> <12> <14.4> <17.28> <20.74> <24.88>
      mathx10
      }{}
\DeclareSymbolFont{mathx}{U}{mathx}{m}{n}
\DeclareFontSubstitution{U}{mathx}{m}{n}
\DeclareMathAccent{\widecheck}{0}{mathx}{"71}
\DeclareMathAccent{\wideparen}{0}{mathx}{"75}

%------------------------------------------

%--------- side numbering -------
\iffalse 
\usepackage{tikz,everypage}
\AtBeginDocument{%
  \AddEverypageHook{%
    \begin{tikzpicture}[remember picture,overlay]
      \path (current page.north west) --  (current page.south west) \foreach \i in {1,...,\fakelinenos} { node [pos={(\i-.5)/\fakelinenos}, xshift=\fakelinenoshift, line number style] {\i} }  ;
    \end{tikzpicture}%
  }%
}
\tikzset{%
  line numbers/.store in=\fakelinenos,
  line numbers=50,
  line number shift/.store in=\fakelinenoshift,
  line number shift=10mm,
  line number style/.style={text=gray},
} 
\fi
%--------- side numbering ------- 

%--------Theorem Environments--------
%theoremstyle{plain} --- default
\newtheorem{thm}{Theorem}[section]

\newtheorem{prop}[thm]{Proposition}
\newtheorem{lem}[thm]{Lemma}

\theoremstyle{definition}
\newtheorem{defin}[thm]{Definition}

\allowdisplaybreaks
%---------------------------------------

\title{The Frisch--Parisi formalism for fluctuations of the Schr\"odinger equation}

\author{Sandeep Kumar, Felipe Ponce-Vanegas, Luz Roncal, and Luis Vega}

%\email{}

\address[S. Kumar]{
	Indominus Advanced Solutions S.L.\\ 36414 Vigo, Spain
}
\email{sandeepkumar.sssu@gmail.com}

\address[F. Ponce-Vanegas]{
BCAM -- Basque Center for Applied Mathematics\\
48009 Bilbao, Spain}
\email{fponce@bcamath.org}
 
\address[L. Roncal]{
BCAM -- Basque Center for Applied Mathematics\\
48009 Bilbao, Spain, Ikerbasque, Basque Foundation for Science, 48011 Bilbao, Spain, and Department of Mathematics\\
UPV/EHU\\
Apto. 644, 48080 Bilbao, Spain}
\email{lroncal@bcamath.org}

\address[L. Vega]{Department of Mathematics\\
 UPV/EHU\\
Apto. 644, 48080 Bilbao, Spain\\
and BCAM -- Basque Center for Applied Mathematics \\
48009 Bilbao, Spain}
\email{luis.vega@ehu.es}

\subjclass[2020]{Primary 35Q41; Secondary 26A27, 76B47.}

\date{\today}

\dedicatory{Dedicated to Prof. Tohru Ozawa on the
occasion of his 60th birthday}

\begin{document}

\begin{abstract}
We consider the solution of the Schr\"odinger equation $u$ in
$\R$ when the initial datum tends to the Dirac comb.
Let $h_{\p, \delta}(t)$ be the fluctuations in time of 
$\int\abs{x}^{2\delta}\abs{u(x,t)}^2\,dx$, for $0 < \delta < 1$,
after removing a smooth background. 
We prove that the Frisch--Parisi formalism holds for
$H_\delta(t) = \int_{[0,t]}h_{\p, \delta}(2s)\,ds$,
which is morally a simplification of the Riemann's 
non-differentiable curve $R$.
Our motivation is to understand the evolution of the
vortex filament equation of polygonal filaments, which
are related to $R$. 
\end{abstract}

\keywords{Schrödinger equation,
  Vortex filament equation,
  Talbot effect,
  Frisch--Parisi formalism,
  Multifractals}

\maketitle

%!TEX root = frisch-parisi.tex

\section{Introduction}

The binormal curvature flow, also known as the vortex filament equation,
\begin{equation}
\label{BF}
\chi_t=\chi_x\wedge\chi_{xx} ,
\end{equation}
is a model for the dynamics 
of vortex filaments in Euler equations.
The function $\chi(t, x)$ describes a family of curves in 3d that move with time $t$
and are parametrized by arclength $x$.
Using the Frenet equations one easily concludes that the
right-hand side of \eqref{BF} is a vector whose modulus equals the curvature
and whose direction is the binormal vector.
By differentiating both sides by $x$,
we get the one dimensional Schr\"odinger map
\begin{equation}
\label{SM}
T_t=T\wedge T_{xx}, \qquad \text{where } T := \chi_x \in \mathbb{S}^2.
\end{equation}
Our interest in this paper is in curves that can develop corners in finite time.
For that purpose, it is better to use the so-called parallel frame $(T,e_1,e_2)$ instead of
the usual Frenet frame, where the former is defined by
\begin{equation}
\label{Tx}
\begin{pmatrix}
	T_x \\ (e_1)_x \\ (e_2)_x
\end{pmatrix}
=
\begin{pmatrix}
	0 & \alpha & \beta \\
	-\alpha & 0 & 0 \\
	-\beta & 0 & 0
\end{pmatrix}\cdot
\begin{pmatrix}
	T \\ e_1 \\ e_2
\end{pmatrix}.
\end{equation}
Hasimoto proved in \cite{Ha} that for $T$ to be a solution of \eqref{SM},
$u := \alpha + i\beta$ has to solve the 1d cubic non-linear Schr\"odinger equation
\begin{equation}
\label{NLS}
iu_t+ u_{xx}+\frac12((|u|^2-A(t))u=0,
\end{equation}
for some  real function $A(t)$;
Hasimoto used the Frenet frame, but
the proof admits more general frames.

In \cite{hozVega2014}, de la Hoz and the fourth author studied the evolution
of regular planar polygons $\chi_M$, with $M$ denoting the number of sides.
In particular, they were interested in the trajectories described by any of the corners.
That is to say, if we assume that  at time $t=0$ there is a corner at the origin,
then they studied the curve in 3d 
\begin{equation}
\label{RM}R_M(t):=\chi_M (t,0).
\end{equation}
These curves show a characteristic fractal behavior which is reminiscent
of the so-called Riemann's non-differentiable function.
In fact, they
found compelling numerical evidence that $\lim_{M \to\infty}R_M(t)=R(t)$ with
\begin{equation}
R(t) := \int_0^t u_D(0, s)\,ds,
\end{equation} 
where $u_D$ is the solution of the linear Sch\"odinger equation
with initial datum $F_D = \sum_{n \in \Z}\delta_n$, that is,
$F_D$ is the Dirac's comb.  It turns out that $R$ is a small modification
of the complex version of Riemann's function
\begin{equation}
	\phi(t) := \sum_{n=1}^\infty \frac{e^{i\pi n^2t}}{i\pi n^2}
	= 2\pi iR\Big(\frac{-t}{4\pi}\Big) - \frac{t}{2} - \frac{i\pi}{6}.
\end{equation}
We notice that $u_D$ exhibits the Talbot effect, that is,
the appearence of rescaled and weighted Dirac combs
at rational times, which easily justifies the fractal appearence of $R$. 
There is a rich literature about the Talbot effect;
see for example \cite{berry96, erdoganBook, rodnianski98, taylor03, oskolkov10}. 

Recently, Banica and the fourth author \cite{BV} tightened the connection 
between $R$ and the binormal curvature flow. 
They proved that the evolution of a corner of a suitably chosen sequence of polygonal vortex filaments approaches $R(t)$
in the limit when the number of sides is infinite. 
Additionally, inspired by the work of Jaffard \cite{jaffard96}, 
they showed that the limiting behavior of the corners falls 
within the multifractal formalism of Frisch and Parisi, 
which is conjectured to govern turbulent fluids.
By analogy with turbulence,
we would expect that $R$ could be understood as the
outcome of some stochastic process;
such an interpretation still seems to be missing.

Yet another interesting physical phenomenon, 
which is closely related to multifractality, 
is the intermittency. 
Roughly speaking, the idea is that the velocity of a fluid in fully developed turbulence 
may erratically change in very small distances, 
suggesting a very irregular structure. 
This phenomenon, called \textit{intermittency in small scales} is related to the Frisch--Parisi multifractal formalism, 
but it does not seem to be well-defined in the literature. 
In \cite{BVE}, by adapting the physical concept of intermittency to the setting of functions and 
giving a precise definition, 
the authors gave quantitative estimates of the intermittency of 
the Riemann's non-differentiable function.

Within this circle of ideas, in \cite{kumar2021},
during an investigation of the dispersive
properties of the free Schr\"odinger equation,
an interesting behavior 
was discovered for the functional
\begin{equation}
h_\delta[f](t) := \int\abs{x}^{2\delta}\abs{u(x,t)}^2\,dx,\qquad x\in \R^d
\qquad \textrm{for } 0 < \delta < 1,
\end{equation}
where $u$ is the solution of the linear Schr\"odinger equation
with initial datum $f$.
By renormalization (removing an infinite and rescaling), 
the authors extended the definition of $h_\delta$
to periodic initial data like the Dirac comb $F_D$;
let us call $h_{\textrm{p}, \delta}[f]$ (p for periodic) to
the renormalization. 

During the renormalization of $h_\delta[f]$ 
a smooth function is removed,
leaving behind small fluctuations that  
approach the point function in Figure~\ref{fig:h_p_Dirac} when
$f$ approaches the Dirac comb $F_D$.
The function $h_{\textrm{p}, \delta}[F_D]$ is
supported at rationals,
so it is somehow a simplification of $u_D$,
which has a complex structure at irrational times.
This simplification offers the possibility of understanding hard questions
associated with $u_D$ by considering first $h_{\textrm{p},\delta}[F_D]$.

\begin{figure}[t] 
\includegraphics[width=0.52\textwidth]{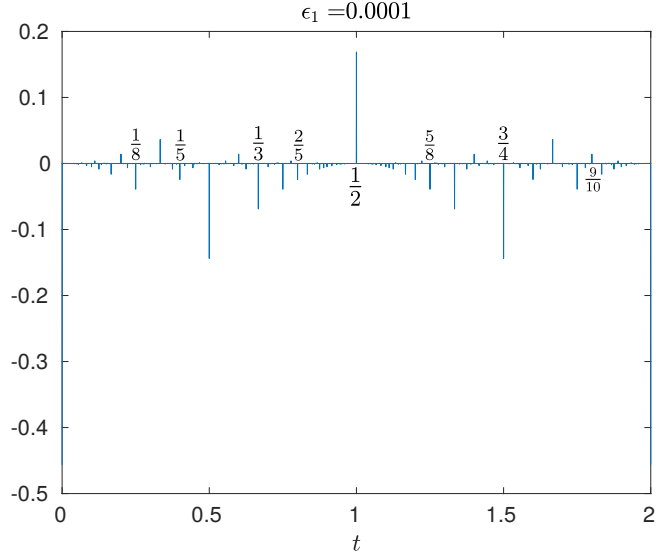}
\caption{Evolution of $h_{\textrm{p},\delta}[f_{\epsilon}]$, 
where $f_{\epsilon}$ is a smooth periodic
function that approaches the Dirac comb
in the sense of distributions as
$\epsilon \to 0^+$.}
\label{fig:h_p_Dirac}
\end{figure}

In \cite{kumar2021}
the authors exposed evidences showing that
\begin{equation}
\label{eq:Hdelta}
H_\delta(t) = 
	\int_{[0,t]} h_{\textrm{p},\delta}[F_D](s)\,ds,
\qquad \textrm{for } 0 < \delta < 1,
\end{equation}
can be seen as the outcome of a $(2/s)$-L\'evy process
with $s := 2(1 + \delta)$.
Unbeknownst to the authors,
similarities between the Riemann non-differentiable
function (which is behind $u_D$) 
and L\'evy processes had already been pointed out
by Jaffard in \cite[Sections~2.3 and 4.4]{fractalsEng97}.

The velocity of turbulent flows differs widely from
point to point, so in this context it has been
introduced the spectrum of singularities of a function, which measures the size of the sets
with different H\"older exponents.

\begin{defin}[H\"older exponent] \label{def:Holder_exp}
Let $f$ be a function and $t\in\R$. 
A function $f \in C^l(t)$, for real $l\ge 0$,
if there is a polynomial $P_t$ of degree at most 
$\floor{l}$ such that in a neighborhood of $t$
\begin{equation}
\abs{f(s) - P_t(s)} \lesssim \abs{t - s}^l.
\end{equation}
The H\"older exponent of $f$ at $t$ is
\begin{equation}
h_f(t) := \sup\{l \mid f \in C^l(t)\}.
\end{equation}
\end{defin}

To measure the size of a set, 
we use the concept of Hausdorff dimension.

\begin{defin}[Hausdorff dimension] \label{def:Hausdorff_Dim}
Let $A\subset \R^n$ and $R_{\varepsilon}$ be the set of all coverings of $A$ by sets $A_i$ of diameter at most $\varepsilon$. Let 
\begin{equation}
\mathcal{H}_\varepsilon^d(A)
	:=\inf_{r\in R_{\varepsilon}}\sum_{A_i\in r}(\operatorname{diam}A_i)^d.
\end{equation}
Then,
\begin{equation}
\mathcal{H}^d(A)
	:= \lim_{\varepsilon \to 0} \mathcal{H}_\varepsilon^d(A)
\end{equation}
is the $d$-dimensional Hausdorff content of $A$.
The \textit{Hausdorff dimension} of $A$ is
\begin{equation}
\dim_{\mathcal{H}} A :=\inf\{d:\mathcal{H}^d(A)=0\}
	=\sup\{d:\mathcal{H}^d(A)=+\infty\}.
\end{equation}
\end{defin}

Now we can define the spectrum of singularities of a function.

\begin{defin}[Spectrum of singularities] \label{def:SingSpec}
Let $f$ be a function and define the set
\begin{equation}
\Gamma_h := \{t \in \R \mid f\textrm{
 has H\"older exponent } h \textrm{ at } t\}.
\end{equation}
The spectrum of singularities is the function
\begin{equation}
D_f(h) = \dim_{\mathcal{H}} \Gamma_h.
\end{equation}
If $\Gamma_h = \emptyset$, then $D_f(h) = -\infty$.
\end{defin}

As we mentioned, in \cite[Theorem~1(iii)]{BV} it was proved
that the spectrum of singularities of $R$ (and modifications of it) is
\begin{equation} \label{eq:SingSpectrum_R}
	D_{R}(h) = 4h - 2, \qquad 
		\text{for all } h \in \Big[\frac{1}{2}, \frac{3}{4}\Big].
\end{equation}
Concerning $H_\delta$, 
it was proved in \cite[Theorem 4]{kumar2021} that
the spectrum of singularities of $H_\delta$ is
\begin{equation} \label{eq:spectrum_sing_H}
D_{H_\delta}(h) = \begin{cases}
\alpha h, & \textrm{if } h \in [0, 1/\alpha], \\
-\infty, & \textrm{if } h > 1/\alpha,
\end{cases}
\end{equation}
where $\alpha = 2/s$ and $s = 2(1 + \delta)$, for $0<\delta<1$.
Surprisingly, for $\alpha$-L\'evy processes 
Jaffard proved in \cite{jaffard99} that the 
spectrum of singularities is almost surely equal to \eqref{eq:spectrum_sing_H}.
Before stating our main result, 
we describe briefly the Frisch--Parisi formalism.

\subsection{Frisch--Parisi formalism}

The so-called multifactral formalism 
for functions relates some functional norms of a function 
to its spectrum of singularities. 
This formalism was introduced by Frisch and Parisi in order to numerically 
determine the spectrum of fully turbulent fluids \cite{FP}. 
Even though the Frisch-Parisi formalism has
several versions,
we decided to use the \textit{wavelet--transform integral
method} as described at the introduction of \cite{jaffardI97}.
First we must define the wavelet transform of a function.

\begin{defin}
\label{def:wt}
The \textit{wavelet transform} of a function $f$ is defined as $\psi_N\ast f$, 
where $\psi_N(x) := N\psi(Nx)$. 
The wavelet $\psi$ is a function whose smoothness
and decay are adjusted depending on 
the problem, and such that
\begin{equation}
\int x^k\psi(x)\,dx = 0, \qquad
	\textrm{for } k = 0, \ldots, L
	\enspace
	\textrm{and some suitable } L.
\end{equation}
\end{defin}

The Frisch--Parisi formalism suggests that
the spectrum of singularities can be computed through
the \textit{scaling exponent} $\eta_f$, 
which is defined by 
\begin{equation} \label{eq:def:scaling_exp}
\eta_f(p) := -\liminf_{N\to \infty}
	\frac{\log\, \norm{\psi_N\ast f}^p_{L^p}}{\log N}.
\end{equation}
The Legendre transform provides a link
between $D_f$ and $\eta_f$ through the conjectured relationship: 
\begin{equation}
D_f(h) = \inf_{p > 0}(ph - \eta_f(p) + 1).
\end{equation}

\subsection{Main result}

Since $H_\delta[F_D]$ might be seen as a 
simplification of the Riemann's non-di\-ffe\-ren\-tia\-ble
function, for which the Frisch-Parisi formalism
has been proved (see \cite{jaffard96} or \cite{BV}), 
then we should be able to prove the
Frisch--Parisi formalism for $H_\delta[F_D]$ in
the range $[0, s/2]$.
We confirm this in our main theorem below.

\newpage 

\begin{thm}[Frisch--Parisi formalism]
\label{thm:FP}
Let $0<\delta<1$ and $s := 2(1+\delta)$.
Let $\psi$ be an integrable function such that:
\begin{enumerate}[(i)]
\item $\int_\R \psi = 0$.
\item $\abs{\psi(x)} \lesssim x^{-\beta}$, for $\beta > 1 + s$.
\end{enumerate}
Let $\eta_{H_\delta}$ be the scaling exponent defined in \eqref{eq:def:scaling_exp}
for the function $H_\delta$.
Then,
\begin{equation}
\eta_{H_\delta}(p) = \begin{cases}
sp/2, & \textrm{if } 0 < p \le 2/s, \\
1, & \textrm{if } p \ge 2/s.
\end{cases}
\end{equation}
In particular, 
the Frisch--Parisi formalism holds 
in the range $[0, s/2]$.
\end{thm}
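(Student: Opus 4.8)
The plan is to compute $\eta_{H_\delta}(p)$ directly from the definition \eqref{eq:def:scaling_exp}, which requires understanding the size of $\norm{\psi_N\ast H_\delta}_{L^p}^p$ as $N\to\infty$. The starting point is the known structure of $h_{\p,\delta}[F_D]$: by the renormalization carried out in \cite{kumar2021}, this is a measure supported on the rationals, with an explicit weight at each $p/q$ of order $q^{-s}$ (up to Gauss-sum factors), so $H_\delta(t)=\int_{[0,t]}h_{\p,\delta}[F_D](s)\,ds$ is morally a ``L\'evy-type'' jump function: near a rational $p/q$ it has a jump of size comparable to $q^{-s}$, and away from rationals with small denominator it is flat to high order. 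This is the same mechanism that produces \eqref{eq:SingSpectrum_R} and \eqref{eq:spectrum_sing_H}. The first step is thus to record a clean quantitative statement: for $N$ dyadic, the set of $t\in[0,1]$ within distance $1/N$ of a rational with denominator $q\le \sqrt N$ has measure $\approx \sum_{q\le\sqrt N} q\cdot (1/N)\approx 1/N\cdot N = $ order $1$ — so one must be more careful and stratify by the size of $q$, which is where the Talbot/Gauss-sum estimates enter.

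Next I would split the estimate into an upper bound and a lower bound for $\norm{\psi_N\ast H_\delta}_{L^p}^p$. For the \textbf{upper bound}: since $\int\psi=0$, the convolution $\psi_N\ast H_\delta(t)$ annihilates the linear (degree-$0$) part of any local polynomial approximation, and the decay hypothesis $\abs{\psi(x)}\lesssim x^{-\beta}$ with $\beta>1+s$ guarantees the tails of $\psi_N$ contribute negligibly and that the relevant local H\"older regularity of $H_\delta$ (exponent $\ge 1$ generically, exponent as low as $s/2\cdot(\text{something})$ at bad rationals — precisely $h_{H_\delta}(p/q)\approx \log(q^{-s})/\log(1/q^{-1}\cdot\text{width})$) controls $\abs{\psi_N\ast H_\delta(t)}$ pointwise by $N^{-\gamma(t)}$ for the appropriate local exponent $\gamma(t)$. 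Integrating $N^{-p\gamma(t)}$ over $t$, and stratifying $t$ by which ``scale of rational'' it is closest to, produces a sum over dyadic ranges of denominators $q\sim 2^j$; the dominant term is a geometric-type sum whose exponent switches behaviour exactly at $p=2/s$, giving $\norm{\psi_N\ast H_\delta}_{L^p}^p\lesssim N^{-sp/2}$ for $p\le 2/s$ and $\lesssim N^{-1}$ for $p\ge 2/s$ (the ``$-1$'' being the unavoidable contribution of a full-measure flat region where $H_\delta$ is Lipschitz, giving $\abs{\psi_N\ast H_\delta}\lesssim N^{-1}$). For the \textbf{lower bound}: it suffices to exhibit, at each scale $N$, a single rational $p/q$ with $q\approx N^{1/2}$ (for the regime $p\le 2/s$) or with $q$ of order $1$ (for $p\ge2/s$) near which $\abs{\psi_N\ast H_\delta}$ is bounded below by $cN^{-sp/2}$ resp.\ $cN^{-1}$ on an interval of commensurate length; this uses that the jump of $H_\delta$ at that rational is genuinely of size $\approx q^{-s}$ and does not cancel, together with a non-degeneracy property of $\psi$ (its first nonvanishing moment, or simply $\psi\not\equiv 0$, ensures $\psi_N\ast(\text{jump})$ is not identically zero).

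Combining the two bounds gives $\eta_{H_\delta}(p)=sp/2$ for $0<p\le 2/s$ and $\eta_{H_\delta}(p)=1$ for $p\ge 2/s$, which is the claimed formula. Finally, to conclude that the Frisch--Parisi formalism holds on $[0,s/2]$, I would compute the Legendre transform $\inf_{p>0}(ph-\eta_{H_\delta}(p)+1)$: on $0<p\le 2/s$ the expression is $p(h-s/2)+1$, minimized at the endpoint $p=2/s$ when $h\le s/2$ giving value $(2/s)h$; for $p\ge 2/s$ it is $ph$, increasing, minimized again at $p=2/s$ giving $(2/s)h$. Hence $\inf_{p>0}(ph-\eta_{H_\delta}(p)+1)=(2/s)h=\alpha h$ for $h\in[0,s/2]=[0,1/\alpha]$, which matches the spectrum \eqref{eq:spectrum_sing_H} on exactly that range; for $h>1/\alpha$ the Legendre transform is not $-\infty$, explaining why the formalism is asserted only on $[0,s/2]$.

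\medskip
\textbf{Main obstacle.} The crux is the upper bound's stratification step: one must show that the contribution to $\norm{\psi_N\ast H_\delta}_{L^p}^p$ coming from neighbourhoods of rationals with denominator $q\sim 2^j$ is controlled uniformly in $j$ by a summable (in $j$, after the geometric sum) quantity, and this rests on having sharp control of the Gauss sums / Talbot weights appearing in $h_{\p,\delta}[F_D]$ — in particular lower bounds away from zero on a positive proportion of rationals at each scale (for the matching lower bound) and the correct $q^{-s}$ upper envelope. Marshalling these number-theoretic inputs, already implicit in \cite{kumar2021}, into the precise pointwise estimate $\abs{\psi_N\ast H_\delta(t)}\lesssim N^{-\gamma(t)}$ with the right local exponent $\gamma(t)$, and doing so with the decay $\beta>1+s$ just barely sufficient to kill the tails, is the technical heart of the argument.
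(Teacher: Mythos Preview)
Your overall strategy and the Legendre--transform verification at the end are correct, and the stratification-by-denominator idea is indeed what the paper does. However, the lower bound you propose for the range $0<p\le 2/s$ has a genuine gap. You suggest exhibiting a \emph{single} rational $p_0/q_0$ with $q_0\approx\sqrt N$; but near such a point the pointwise size of $\psi_N\ast H_\delta$ is the jump height $\approx q_0^{-s}\approx N^{-s/2}$ (it cannot depend on $p$), not $N^{-sp/2}$. Integrating the $p$-th power over an interval of length $\sim 1/N$ then gives only $\norm{\psi_N\ast H_\delta}_p^p \gtrsim N^{-ps/2-1}$, hence $\eta_{H_\delta}(p)\le ps/2+1$, which is off by exactly~$1$. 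To recover the missing factor of $N$ one must sum over \emph{all} rationals with $q\le c_0\sqrt N$ simultaneously: the intervals of width $\sim 1/N$ around them are pairwise disjoint, so
\[
\norm{\psi_N\ast H_\delta}_p^p \;\gtrsim\; \frac{1}{N}\sum_{q\le c_0\sqrt N}\frac{\varphi(q)}{q^{ps}},
\]
and the paper proves this sum is $\gtrsim N^{(2-ps)/2}$ for $ps<2$ (and $\gtrsim\log N$ at $ps=2$) via the identity $\varphi(q)=q\sum_{d\mid q}\mu(d)/d$ and a M\"obius/$\zeta(2)$ computation. This totient lower bound is a genuine ingredient your sketch does not supply. (For $p>2/s$ your single-rational argument with $q_0=1$ does work, and matches the paper's Section~2.3.1.)

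Two smaller points where the paper's route is cleaner than yours. First, the paper's opening move is integration by parts: writing $\psi=\phi'$ (possible since $\int\psi=0$) gives
\[
\psi_N\ast H_\delta(x)=\sum_{(p,q)=1}\frac{a_{q,\delta}}{q^s}\,\phi\bigl(N(x-p/q)\bigr),
\]
so the problem becomes a pure sum over rationals and there is no need to pass through local H\"older exponents $\gamma(t)$ at all; this is also what makes the decay hypothesis $\beta>1+s$ natural, since it translates to $|\phi(x)|\lesssim|x|^{-\alpha}$ with $\alpha>s$, exactly the threshold needed for the tail lemma. Second, the number-theoretic input is milder than your ``main obstacle'' paragraph suggests: the only property of the weights $a_{q,\delta}$ used anywhere is $|a_{q,\delta}|\simeq_\delta 1$; no Gauss-sum or Talbot estimates beyond that are needed.
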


\begin{center}
\includegraphics[width=0.45\textwidth]{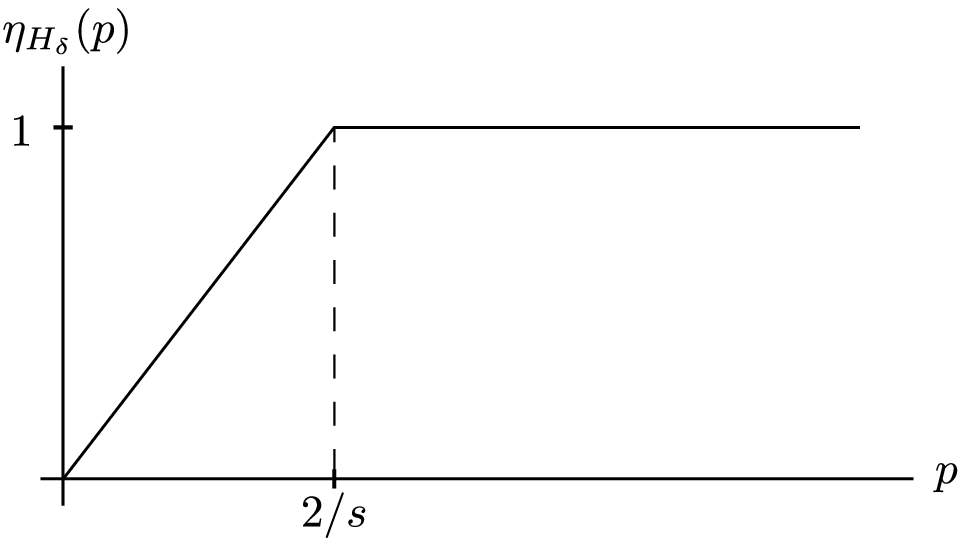}
\end{center}

As already said, we see $H_\delta[F_D]$ as a 
simplification of the Riemann's non-differentiable function $R$.
Therefore, a natural question is to determine
whether the multifractal formalism holds true for the non-linear trajectories $R_M$ given in \eqref{RM}.
This seems to be a very challenging question at the theoretical level,
so, to gain insight into the subject,
we computed numerically the spectrum of singularities of $R_M$
for several values of $M$; see Figure~\ref{fig:HdeltaRNDF}.A.
As a matter of comparision,
we also computed numerically the spectrum
of $R$ (Fig. \ref{fig:HdeltaRNDF}.A) and $H_\delta$ (Fig. \ref{fig:HdeltaRNDF}.B),
for which the theoretical values are known.
In Appendix~\ref{app:numerical} we describe the methods
used to compute the spectrum of singularities.
Although more careful experiments are needed,
they suggest that the spectrum of singularities of $R_M$
should be equal to that of $R$.

We wonder whether it is possible to define $H_\delta$
replacing $u_D$ by the solution of the non-linear Schr\"odinger equation, and in that case,
whether the resulting $H_\delta$ and its spectrum of singularities
is more amenable to theoretical studies.

\begin{figure}[t]
	\includegraphics[width=0.96\textwidth]{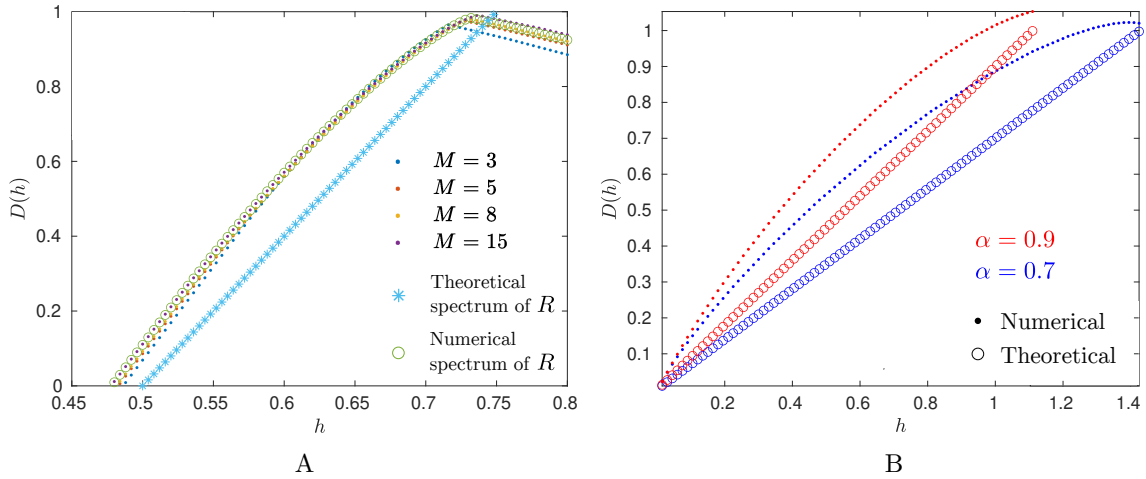}
	\caption{The spectrum of singularities $D(h)$ estimated
		using the \textit{wavelet--transform modulus maxima}  method for: 
		A) $M$-sided polygons $R_M$ with different $M$ values and Riemann's function $R$, and
		B) $H_\delta$ with two values of $\alpha := 1/(1 + \delta)$.  
		Clearly, up to the numerical errors, it captures the support of $D(h)$ very well in both cases.
		See Appendix~\ref{app:numerical} for more details about the numerical methods.}
	\label{fig:HdeltaRNDF}
\end{figure}
%\cite{jaffardI97}

\subsection*{Notation}

We write $A \lesssim B$ if $A \le CB$ for some constant $C > 0$;
the relations $\gtrsim$ and $\simeq$ are similar.
We also write $\norm{f}_p = \norm{f}_{L^p([0,1])}$.

The H\"older exponent is given in Definition~\ref{def:Holder_exp};
the Hausdorff dimension is Defintion~\ref{def:Hausdorff_Dim};
the spectrum of singularities is Definition~\ref{def:SingSpec}; and
the scaling exponent is \eqref{eq:def:scaling_exp}.

\subsection*{Funding}

This work was supported by the Basque Government 
(BERC 2022-2025 program) and by the Spanish State Research Agency
(Severo Ochoa SEV-2017-0718).
The second author was funded by
the project PGC2018-094528-B-I00 - IHAIP and by
a Juan de la Cierva--Formation grant FJC2019-039804-I.  
The third author acknowledges the project PID2020-113156GB-I00, 
the RyC project RYC2018-025477-I, and Ikerbasque. 
The fourth author is supported by ERCEA Advanced Grant 2014 669689 - HADE and 
the project PGC2018-094522-B-I00. 

%!TEX root = frisch-parisi.tex

\section{Proof of Theorem~\ref{thm:FP}}

To prove Theorem~\ref{thm:FP} 
it is more convenient to work with $h_{\p, \delta}$
rather than directly with $H_\delta$.
Since $\psi$ has vanishing mean, 
we can write it as $\psi = \phi'$.
Thus, the operator $\psi_N\ast H_\delta$ is expressed as
\begin{equation}
\label{eq:psiHd}
\int \psi_N(x - y) H_\delta(y) \,dy =
	-\int \frac{d}{dy}\phi(N(x - y))\, H_\delta(y)\,dy = \\
	\int \phi(N(x - y))h_{\textrm{p},\delta}(y) \,dy.
\end{equation}
Here, the wavelet $\phi$ has the following properties: for some $c_1,c_2>0$,
\begin{itemize}
\item $|\phi(x)|\lesssim 1$, for $|x|\le c_1$,
	because $\psi$ is integrable;
\item decay of the tails: for some $\alpha > s$,
\begin{equation}
\label{eq:decay}
|x|^\alpha\abs{\phi(x)} \le c_2, \qquad
	\textrm{for } \abs{x} \ge c_1,
\end{equation}
because of the decay of $\psi$;
\item the $L^p$-norm is concentrated
\begin{equation}
\int_{\abs{x}\le c_1}\abs{\phi}^p \,dx \ge \frac{1}{2}\int\abs{\phi}^p \,dx.
\end{equation} 
\end{itemize}
Along the paper, we will be using systematically the properties of $\phi$ without further comment.

Now let us write out 
the distribution $h_{\textrm{p}, \delta}[F_D] \in \Sz'(\mathbb{T})$:
\begin{equation}
\label{eq:hpdelta}
h_{\textrm{p}, \delta}[F_D](x) = 
	\sum_{(p,q) = 1}\frac{a_{q,\delta}}{q^s}\delta_{p/q}(x),
\end{equation}
where $s = 2(1 + \delta)$ and
\begin{equation}
a_{q,\delta} = 
	-2b_{1,\delta}\zeta(2(1+\delta))
	\begin{cases}
		1, & \textrm{if } q \textrm{ is odd,} \\
		-2(2^{1 + 2\delta} - 1), & \textrm{if }
			q \equiv 2\Mod{4}, \\
		2^{2(1 + \delta)}, & \textrm{if }
			q \equiv 0\Mod{4}.
	\end{cases} 
\end{equation}
Here $\zeta$ is the Riemann zeta function and
\begin{equation}
b_{1,\delta} = 
	\frac{1}{(2\pi)^{2\delta}}\frac{\Gamma(2\delta)}{\abs{\Gamma(-\delta)}\Gamma(\delta)}.
\end{equation}
At the end,
the only property of $a_{q,\delta}$ we will make use of
is $\abs{a_{q,\delta}} \simeq_\delta 1$. 

Let us denote the last integral in \eqref{eq:psiHd} by $P_N h_{\textrm{p},\delta}$, so in view of \eqref{eq:hpdelta}
\begin{equation}
P_N h_{\textrm{p},\delta}(x) = 
	\sum_{p/q}
		\frac{a_{q,\delta}}{q^s}\phi(N(x - p/q)),
\end{equation}
where from here onwards, 
the notation $\sum_{p/q}$ stands for the sum over all pairs of integers $p,q$ such that $(p,q) = 1$. 
Without loss of generality, we will assume that $q$ is nonnegative. 
There should not be confusion between the appearance of $p$ as an integer and in the $L^p$ norms.
We aim to prove the next theorem,
from which our main theorems follow.

\begin{thm} \label{thm:Lp_norm_N}
Let $0< p\le \infty$, $0<\delta<1$ and $s=2(1+\delta)$. Then, for $N\gg1$,
\begin{equation}
\norm{P_N h_{\textrm{p},\delta}}_{L^p([0,1])} \simeq_{\delta}
\begin{cases} 
N^{-1/p}, &\textrm{if } 2/s\le p \le \infty, \\
N^{-s/2}(\log N)^{s/2}, &\textrm{if } p = 2/s, \\
N^{-s/2}, &\textrm{if } 0<p \le 2/s.
\end{cases}
\end{equation}
\end{thm}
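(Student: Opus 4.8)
The plan is to prove the two–sided estimate by establishing matching upper and lower bounds; the regime $0<p\le 2/s$ is the delicate one, and everything reduces to the properties $|a_{q,\delta}|\simeq_\delta 1$ and $|\phi(t)|\lesssim\min(1,|t|^{-\alpha})$ with $\alpha>s$, together with the concentration of $\phi$ in $L^p$ near the origin.

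\emph{Upper bound.} First record the pointwise majorant
\[
|P_Nh_{\p,\delta}(x)|\ \lesssim_\delta\ U_N(x)+N^{1-s},\qquad
U_N(x):=\sum_{1\le q\le N}\frac1{q^s}\min\!\Bigl(1,\Bigl(\tfrac{q}{N\|qx\|}\Bigr)^{\alpha}\Bigr),
\]
where $\|\cdot\|$ is the distance to the nearest integer: for each $q$ only the nearest fraction $p/q$ matters, the remaining $p$'s giving tails summable because $\alpha>1$, and $N^{1-s}$ absorbs the denominators $q\gtrsim N$. For $2/s\le p\le\infty$ one simply sums the $q$–blocks in $L^p([0,1])$: the $q$–th block is $\simeq q$ essentially disjoint copies of $\phi(N\cdot)$, hence has $L^p$–norm $\simeq q^{1/p-s}N^{-1/p}$, and the triangle inequality ($p\ge1$) or $p$–subadditivity of $p$–th powers ($p<1$), combined with the convergence of $\sum_q q^{1-sp}$ for $p>2/s$ (and its $\simeq\log N$ size at $p=2/s$), yields the claimed $N^{-1/p}$ and $N^{-s/2}(\log N)^{s/2}$. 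For $0<p<2/s$ this summation is too lossy and one estimates instead the distribution function $|\{x\in[0,1]:U_N(x)>\lambda\}|$ for $N^{-s/2}\lesssim\lambda\lesssim1$: decomposing dyadically in $q$, the arithmetic heart of the matter is that for a dyadic block $q\in[Q,2Q]$ with $Q\lesssim\sqrt N$ the resonance sets $\{x:\|qx\|\lesssim Q/N\}$ are pairwise disjoint, so at most one $q$ per block resonates at a given point; a first/second–moment count of resonant $q$'s then bounds the measure by $\lesssim\lambda^{-2/s}/N$ up to logarithmic factors, and the layer–cake inequality $\int_0^1|P_Nh_{\p,\delta}|^p\le N^{-sp/2}+p\int_{N^{-s/2}}^{\infty}\lambda^{p-1}|\{U_N>c\lambda\}|\,d\lambda$ gives $\norm{P_Nh_{\p,\delta}}_{L^p}\lesssim N^{-s/2}$; sharpening the pigeonholing removes the spurious logs.

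\emph{Lower bound.} For all $0<p\le\infty$ the lower bound comes from localisation near rationals of small denominator. Fix a large constant $A=A(\alpha,s)$. For $p/q\in[0,1)$ with $q\le\sqrt N/A$ the intervals $I_{p/q}:=\{|x-p/q|\le c_1/N\}$ are pairwise disjoint, since $|p/q-p'/q'|\ge(qq')^{-1}\ge A^2/N\gg c_1/N$, and on $I_{p/q}$ one has
\[
P_Nh_{\p,\delta}(x)=\frac{a_{q,\delta}}{q^s}\,\phi\bigl(N(x-p/q)\bigr)+O\!\bigl(\varepsilon(A,N)\,q^{-s}\bigr),
\]
with $\varepsilon(A,N)\to0$ as $A,N\to\infty$ (uniformly in such $p/q$): indeed for every $q'\ne q$ only $O(1)$ fractions $p'/q'$ lie within $O(1/N)$ of $p/q$ while the rest are $\gtrsim1/q'$ away, so $\sum_{q'\ne q}q'^{-s}\sum_{p'}|\phi(N(x-p'/q'))|\lesssim\bigl(A^{-2\alpha}+A^{-2(s-1)}+N^{1-s/2}A^{-s}\bigr)q^{-s}$. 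Using $|a_{q,\delta}|\ge c_\delta>0$, the concentration $\int_{|y|\le c_1}|\phi|^p\ge\tfrac12\int_\R|\phi|^p>0$, and $(a-b)_+^p\ge a^p-b^p$ for $p\le1$ (the reverse triangle inequality for $p\ge1$), this gives $\int_{I_{p/q}}|P_Nh_{\p,\delta}|^p\gtrsim_\delta q^{-sp}/N$ once $A$ is chosen large. Summing over the disjoint $I_{p/q}$,
\[
\int_0^1|P_Nh_{\p,\delta}|^p\ \gtrsim_\delta\ \frac1N\sum_{q\le\sqrt N/A}\varphi(q)\,q^{-sp}\ \simeq\ \frac1N\sum_{q\le\sqrt N}q^{1-sp},
\]
and the dichotomy for $\sum_{q\le\sqrt N}q^{1-sp}$ (convergent for $p>2/s$, $\simeq\log N$ at $p=2/s$, $\simeq N^{1-sp/2}$ for $p<2/s$) reproduces $N^{-1/p}$, $N^{-s/2}(\log N)^{s/2}$ and $N^{-s/2}$ respectively; the case $p=\infty$ is $\sup_{I_{1/1}}|P_Nh_{\p,\delta}|\simeq_\delta|a_{1,\delta}|\sup_{|y|\le c_1}|\phi|\simeq_\delta1$.

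\emph{Main obstacle.} The delicate step is the upper bound for $0<p\le2/s$: the majorant $U_N$ must not be handled by summing its $q$–blocks in $L^p$ (this loses a full power of $N$), and one is forced to exploit the near–disjointness of the resonance sets of denominators $\lesssim\sqrt N$ in order to see that $|P_Nh_{\p,\delta}(x)|$ is, for most $x$, governed by a single nearest–fraction term. Making this quantitative enough to reach the sharp $N^{-s/2}$ — and the exact $(\log N)^{s/2}$ at $p=2/s$ rather than $N^{-s/2}$ up to logarithmic losses — is where the real work lies.
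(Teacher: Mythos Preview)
Your lower bound and your upper bound for $p\ge 2/s$ are essentially the paper's argument. The divergence is in the upper bound for $0<p<2/s$, which you single out as the main obstacle and attack via a distribution-function/layer-cake estimate on the majorant $U_N$. The paper bypasses this entirely. It makes the \emph{same} split at $q\le c_0\sqrt N$ that you already use for the lower bound, and then:
\begin{itemize}
\item For the small-denominator piece $\vc{M}$, the bumps have pairwise disjoint supports (your own observation: two fractions with denominators $\le c_0\sqrt N$ are at least $1/(c_0^2N)$ apart), so $\|\vc{M}\|_p^p$ is computed \emph{exactly}, not via subadditivity:
\[
\|\vc{M}\|_p^p\ \simeq\ N^{-1}\sum_{q\le c_0\sqrt N}\varphi(q)\,q^{-sp}\ \lesssim\ N^{-1}\sum_{q\le c_0\sqrt N}q^{1-sp}\ \simeq\ N^{-sp/2}.
\]
\item For the large-denominator piece $\vc{E}$, a one-line tail sum gives $\|\vc{E}\|_1\lesssim N^{-s/2}$; and since $s=2(1+\delta)>2$ forces $2/s<1$, H\"older on $[0,1]$ immediately yields $\|\vc{E}\|_p\le\|\vc{E}\|_1\lesssim N^{-s/2}$ for every $p\le 1$.
\end{itemize}
So the ``delicate'' regime is not delicate once the split is made \emph{before} taking the $L^p$ norm: the disjointness you already exploit downstairs does the whole job upstairs, with no level-set analysis, no pigeonholing to sharpen, and no spurious logarithms to remove. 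Your layer-cake route is not wrong in principle and would recover the result if $|\{U_N>\lambda\}|\lesssim\lambda^{-2/s}/N$ is established cleanly, but as written that estimate and the log-removal are only asserted; the paper's route shows they are unnecessary.
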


To estimate the $L^p$ norm of $P_Nh_{\p,\delta}$, 
the idea is to split this function as
\begin{align}
P_N h_{\textrm{p},\delta}(x) &= 
	\sum_{\substack{p/q \\ q \le c_0\sqrt{N}}}
		\frac{a_{q,\delta}}{q^s}\phi(N(x - p/q)) \; +
		&& (\textrm{main term}\enspace \vc{M}(x)) \\
	&\sum_{\substack{p/q \\ q > c_0\sqrt{N}}}
		\frac{a_{q,\delta}}{q^s}\phi(N(x - p/q)). 
		&& (\textrm{error term}\enspace \vc{E}(x))
\end{align}
Here, $0 < c_0 \ll 1$ is a constant to be fixed later.
In light of the inequality
\begin{equation}
\abs{\norm{P_N h_{\p,\delta}}_p - \norm{\vc{M}}_p} \le 
	\norm{\vc{E}}_p,
\end{equation}
the goal is to get an estimate for $\norm{\vc{M}}_p$
and a suitable upper bound of $\norm{\vc{E}}_p$.

\subsection{The role of $\vc{M}$}

This subsection is philosophical in nature.
We want to discuss what is the relationship
between $\vc{M}$ and the spectrum of singularities
of $H_\delta$.

In \cite[Theorem 4]{kumar2021}, 
during the proof of \eqref{eq:spectrum_sing_H},
it is actually shown that
the H\"older exponent of $H_\delta$ 
is $s/\mu$, for $\mu > 2$,
exactly in the set of numbers $\Gamma_\mu$
with irrationality $\mu$.
Let us recall the definition of irrationality.

\begin{defin}[Irrationality Measure]
A number $x$ has irrationality $\mu$ if
for every $\eta < \mu$ 
there are infinitely many rationals $p/q$ such that
\begin{equation}
0 < \Big|x - \frac{p}{q}\Big| < \frac{1}{q^\eta},
\end{equation}
but for $\eta > \mu$ there are at most finitely many.
\end{defin} 

The set $\Gamma_\mu$ of numbers with irrationality $\mu$ 
has Hausdorff dimension $2/\mu$, which is
consequence of Jarn\'ik's \cite[Theorem 1]{Jarnik1931}, that is,
\begin{gather}
\dim W = 2/\mu \qquad\mbox{and}\qquad 
	\mathcal{H}^{2/\mu}(W) = +\infty, \\
\text{ where } \quad W = \Big\{x \mid \Big|x - \frac{p}{q}\Big| < \frac{1}{q^\mu} 
\textrm{ for infinitely many rationals } p/q\Big\}.
\end{gather} 

Now if we consider $\vc{M}$, 
it is essentially supported around fractions
$p/q$ with $q \le \sqrt{N}$;
let us forget about the parameter $c_0$,
which is introduced for technical reasons.
We can decompose $\vc{M}$ in a dyadic parameter $\lambda$ as
\begin{equation}
\vc{M}(x) = \sum_{\lambda \le \sqrt{N}}
	\sum_{q \simeq \lambda} 
	\frac{a_{q,\delta}}{q^s}\phi(N(x - p/q))
=: \sum_{\lambda \le \sqrt{N}} \vc{M}_\lambda(x).
\end{equation}
Hence, ignoring the tail of $\phi$,
$\vc{M}_\lambda$ is supported in a set $V_\lambda$
which is union of $\simeq \lambda^2$
pairwise disjoint intervals of length $1/N$.
For each $x \in V_\lambda$ we can find a fraction $p/	q$ such that
\begin{equation}
\Big|x - \frac{p}{q}\Big| < \frac{1}{N},
\end{equation} 
where $q \simeq \lambda \simeq N^{1/\mu}$ for some $\mu \ge 2$.
We might see $V_\lambda$ as a blurring of $\Gamma_\mu$
at scale $1/N$, so
let us rename $V_\lambda$ as ``$\Gamma_\mu$'' and
notice that $\abs{``\Gamma_\mu"} \simeq N^{2/\mu - 1}$,
which is what we would expect of a blurring
at scale $1/N$ of a set of dimension $2/\mu$.

We can compute heuristically the H\"older dimension
of $P_N H_\delta$ in ``$\Gamma_\mu$'', where $H_{\delta}$ was defined in \eqref{eq:Hdelta}.
Neglecting $\vc{E}$ (we are being overbold here), 
for $x,y\in ``\Gamma_\mu"$ with $\abs{x - y}\simeq 1/N$ we would have
\begin{equation}
\abs{H_\delta(x) - H_\delta(y)} \simeq
	\vc{M}(x) \simeq N^{-s/\mu}.
\end{equation}
Hence, in $``\Gamma_\mu"$ the H\"older exponent would be $s/\mu$, 
which agrees heuristically with  \cite[Theorem 4]{kumar2021}.
The Frisch--Parisi formalism is thus reflected 
in the $L^p$ norm of $\vc{M}$,
see for instance \eqref{eq:Lp_norm_M_above}.
The strength of the relationship between $\vc{M}$ and $D_{H_\delta}$
fades away as $\mu \to 2$,
and at $\mu = 2$ the ``error term'' $\vc{E}$
takes the main role.

%%%%%%%%%%%%%%%%%%%%%%%%
\subsection{Proof of Theorem \ref{thm:Lp_norm_N}: the upper bound}
\label{sub:upper}
%%%%%%%%%%%%%%%%%%%%%%%%

First, we bound $P_N h_{\textrm{p},\delta}$ pointwise with 
a simpler function.
Since $\phi$ decays strongly, 
then to control it pointwise
we can tile $\R$ with intervals $J$ with some
suitable length $\abs{J} = c_1$ so that
\begin{equation}
\phi(x) \le \sum_J b_J\ind_{J}(x),
\end{equation}
where $b_J$ are coefficients decaying very fast; 
the tiling is so that
one of the intervals, call it $J_0$, is centered at the origin.
Hence, we can write
\begin{equation}
\norm{P_N h_{\textrm{p},\delta}(\cdot)}_p \lesssim
	\sum_J b_J \Big\lVert\sum_{p/q}\frac{1}{q^s}
		\ind_J(N(\cdot - p/q))\Big\lVert_p.
\end{equation}
By translation symmetry, it suffices to consider $J = J_0$.

Let $\{K\}$ be a tiling of $\R$ with intervals
of length $4c_1/N$, one of them centered at zero,
and let $\{K'\}$ be another tiling equal to $\{K\}$
but shifted by $2c_1/N$.
Since every interval $I_{p/q}$ of length $2c_1/N$ and
centered at $p/q$ is
contained in one interval either from $\{K\}$ or from $\{K'\}$, then
\begin{equation}
\sum_{p/q}
	\frac{1}{q^s}\ind_{I_{p/q}}(x) \le 
\sum_K\sum_{p/q \,\in\, K}
	\frac{1}{q^s}\ind_K(x) +
\sum_{K'}\sum_{p/q \,\in\, K'}
	\frac{1}{q^s}\ind_{K'}(x).
\end{equation}
Let us assume that $c_0 < (2c_1)^{-1/2}$. Hence, it suffices to control the $L^p$ norm of
\begin{equation}
\sum_K\sum_{p/q \,\in\, K}
	\frac{1}{q^s}\ind_K = 
\sum_K\sum_{\substack{p/q \,\in\, K \\ q\le c_0\sqrt{N}}}
	\frac{1}{q^s}\ind_K + 
\sum_K\sum_{\substack{p/q \,\in\, K \\ q > c_0\sqrt{N}}}\frac{1}{q^s}\ind_K 
=: \vc{M} + \vc{E}.
\end{equation}

Now we prove an upper bound for $\vc{E}$.

\begin{lem} \label{thm:Error_U}
Let $0 < p\le \infty$. 
Let $0<\delta<1$ and $s=2(1+\delta)$.
Then, for $N\gg1$,
\begin{equation}
\|\vc{E}\|_{p}\lesssim_{\delta} 
\begin{cases}
N^{-\frac{s}{2}+\frac{1}{2p'}}, & 
	\textrm{if } 1 \le p \le \infty \\
N^{-\frac{s}{2}}, &
	\textrm{if } 0 < p < 1.
\end{cases}
\end{equation}
\end{lem}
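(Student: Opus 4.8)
The error term is $\vc{E}(x) = \sum_K \big(\sum_{p/q \in K,\, q > c_0\sqrt N} q^{-s}\big)\ind_K(x)$, a sum of characteristic functions of pairwise disjoint intervals $K$ of length $4c_1/N$ with coefficients $c_K := \sum_{p/q \in K,\, q > c_0\sqrt N} q^{-s}$. The key quantitative input is a bound on how many reduced fractions $p/q$ with a given size of denominator can land in a fixed short interval: for dyadic $\lambda$, the number of reduced fractions in $[0,1]$ with $q \simeq \lambda$ lying in an interval of length $1/N$ is $\lesssim \lambda^2/N + 1$. Summing $q^{-s} \simeq \lambda^{-s}$ over the dyadic ranges $\lambda > c_0\sqrt N$ gives, for each $K$,
\begin{equation}
c_K \lesssim \sum_{\lambda > c_0\sqrt N} \lambda^{-s}\Big(\frac{\lambda^2}{N} + 1\Big)
\lesssim \frac{1}{N}\sum_{\lambda > c_0\sqrt N}\lambda^{2-s} + \sum_{\lambda > c_0\sqrt N}\lambda^{-s}.
\end{equation}
Since $s = 2(1+\delta) > 2$ we have $2 - s < 0$ and $-s < 0$, so both geometric-type sums are dominated by their first terms: the first contributes $\tfrac1N (\sqrt N)^{2-s} = N^{-s/2}$ and the second $(\sqrt N)^{-s} = N^{-s/2}$. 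Hence $c_K \lesssim_\delta N^{-s/2}$ uniformly in $K$, which immediately yields $\norm{\vc{E}}_\infty \lesssim_\delta N^{-s/2}$.

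For the remaining $L^p$ norms I would exploit that the intervals $K$ are disjoint of length $\simeq 1/N$ and there are $\lesssim N$ of them (in fact at most finitely many have $c_K \neq 0$, but $\lesssim N$ suffices). For $0 < p < 1$ one uses the quasi-subadditivity $\norm{\sum_K c_K \ind_K}_p^p \le \sum_K c_K^p |K| \lesssim N \cdot N^{-ps/2}\cdot N^{-1} = N^{-ps/2}$, giving $\norm{\vc{E}}_p \lesssim_\delta N^{-s/2}$. Actually here one must be slightly more careful and keep the sum $\sum_K c_K^p$ finite: rather than bounding each $c_K$ by $N^{-s/2}$ and multiplying by $N$, it is cleaner to note $\sum_K c_K \lesssim \sum_{q > c_0\sqrt N} q \cdot q^{-s} \lesssim N^{-s/2+1}$ (summing over all reduced fractions in $[0,1]$, of which there are $\simeq q$ for each $q$), and since $c_K \lesssim N^{-s/2}$ we get $\sum_K c_K^p \le (\max_K c_K)^{p-1}\sum_K c_K$ only when $p \ge 1$; for $p<1$ instead use $\sum_K c_K^p \le (\#\{K: c_K\neq 0\})^{1-p}(\sum_K c_K)^p$ via Hölder — but since $\#\{K\} \lesssim N$ and $|K| \simeq 1/N$ this is exactly where the factor $N^{-1}$ from $|K|$ cancels the growth, delivering $N^{-s/2}$. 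For $1 \le p \le \infty$ interpolate (or compute directly): $\norm{\vc{E}}_p^p = \sum_K c_K^p|K| \lesssim N^{-1}(\max_K c_K)^{p-1}\sum_K c_K \lesssim N^{-1}\cdot N^{(-s/2)(p-1)}\cdot N^{-s/2+1} = N^{-sp/2}$, so $\norm{\vc{E}}_p \lesssim_\delta N^{-s/2}$ — wait, this must be reconciled with the stated $N^{-s/2 + 1/(2p')}$; the discrepancy is because $\sum_K c_K \lesssim N^{-s/2+1}$ is wasteful for large $p$, and the sharp bound comes from $\norm{\vc{E}}_\infty \lesssim N^{-s/2}$ together with $\norm{\vc{E}}_1 \lesssim N^{-s/2+1/2}$ (the latter from $\sum_K c_K|K| \lesssim N^{-1}\sum_{q>c_0\sqrt N}q^{1-s} \lesssim N^{-1}N^{1-s/2+?}$), and then Hölder's inequality $\norm{\vc{E}}_p \le \norm{\vc{E}}_1^{1/p}\norm{\vc{E}}_\infty^{1/p'}$ gives $N^{(-s/2+1/2)/p}N^{(-s/2)/p'} = N^{-s/2 + 1/(2p)} = N^{-s/2+1/(2p')}$ after using $1/p + 1/p' = 1$... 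I will need to carefully track the $\tfrac1N$ from $|K|$.

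The main obstacle is the fraction-counting estimate and its bookkeeping: one must control, uniformly over short intervals, the contribution $\sum_{p/q \in K,\, q > c_0\sqrt N} q^{-s}$, and — crucially — make sure that when passing to $L^p$ norms the number of nonzero intervals ($\lesssim N$) combined with their length ($\simeq 1/N$) produces the correct power. The dichotomy $N^{-s/2+1/(2p')}$ versus $N^{-s/2}$ at $p = 1$ reflects exactly whether the $L^1$ mass (which sees the measure $|K|$) or the $L^\infty$ height controls the norm; the clean route is to establish the two endpoint bounds $\norm{\vc{E}}_\infty \lesssim_\delta N^{-s/2}$ and $\norm{\vc{E}}_1 \lesssim_\delta N^{-s/2 + 1/2}$ rigorously and then interpolate for $1 < p < \infty$, and separately handle $0 < p < 1$ by quasi-subadditivity as above. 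Everything rests on $s > 2$, which makes the denominator sums converge with the dominant term at $q \simeq \sqrt N$.
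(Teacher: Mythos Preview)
Your core argument is correct and in fact proves something \emph{stronger} than the lemma: the Farey-type spacing bound (distinct reduced fractions with $q\simeq\lambda$ are separated by $\gtrsim 1/\lambda^2$, so at most $\lesssim \lambda^2/N+1$ of them lie in any interval of length $\simeq 1/N$) gives $c_K\lesssim N^{-s/2}$ uniformly, hence $\norm{\vc{E}}_\infty\lesssim N^{-s/2}$. Since the $K$'s are disjoint, $\norm{\vc{E}}_p^p=\sum_K c_K^p\abs{K}\lesssim N\cdot N^{-ps/2}\cdot N^{-1}=N^{-ps/2}$ for every $0<p<\infty$, so $\norm{\vc{E}}_p\lesssim N^{-s/2}$ for all $p$. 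This implies the stated bound because $N^{-s/2}\le N^{-s/2+1/(2p')}$ when $p\ge 1$.

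This is a genuinely different route from the paper. The paper does not use the Farey spacing; it counts fractions with a \emph{fixed} denominator $q$ in $K$ (at most $\lesssim q/N+1$), which after summing over $q>c_0\sqrt N$ only gives the weaker $\norm{\vc{E}}_\infty\lesssim N^{(1-s)/2}$. The paper then combines this with the $L^1$ bound $\norm{\vc{E}}_1\lesssim N^{-s/2}$ (same as yours) and interpolates to get $N^{-s/2+1/(2p')}$; for $0<p<1$ it simply uses $\norm{\vc{E}}_{L^p([0,1])}\le\norm{\vc{E}}_1$. Your approach is more efficient and yields the uniform $N^{-s/2}$ in one stroke.

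The confusion in your last paragraph is self-inflicted: you keep trying to reproduce the \emph{weaker} stated exponent and in the process write incorrect things (e.g.\ $\norm{\vc{E}}_1\lesssim N^{-s/2+1/2}$ is false --- the correct bound is $N^{-s/2}$, since $\sum_K c_K\abs{K}\lesssim N^{-1}\sum_{q>c_0\sqrt N}q^{1-s}\lesssim N^{-s/2}$ --- and the final exponent identity $1/(2p)=1/(2p')$ is wrong). Delete that paragraph; your first two computations already prove the lemma, with room to spare.
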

\begin{proof}
We begin with the range $1 \le p \le \infty$.
We first estimate the $L^1$ norm. 
Let $\varphi$ be the Euler's totient function\footnote{The Euler's totient function
$\varphi(n)$ is the number of integers $k$, $1\le k\le n$, 
such that $(n,k)=1$}, then
\begin{align}
\|\vc{E}\|_{1}&\lesssim_{\delta}
\sum_K\sum_{\substack{p/q \,\in\, K \\ q > c_0\sqrt{N}}}
	\frac{1}{q^s}\abs{K} \\
	&\lesssim_{\delta} \frac{1}{N}\sum_{q > c_0\sqrt{N}}
		\frac{\varphi(q)}{q^{s}}\\
&\lesssim_{\delta} \frac{1}{N}\sum_{q > c_0\sqrt{N}}
		\frac{1}{q^{s-1}}
	\qquad (\textrm{by } \varphi(q) \le q)\\
&\lesssim_{\delta}\frac{1}{N}(\sqrt N)^{-s+2}=N^{-s/2}.
\end{align}
On the other hand,
\begin{equation}
\|\vc{E}\|_{\infty} \lesssim_{\delta}
	\sup_{|K|=2c_1/N}	
	\sum_{\substack{p/q \,\in\, K \\ q > c_0\sqrt{N}}}\frac{1}{q^{s}}.
\end{equation}
Fix an interval $K$.
If $kN/(2c_1) \le q < (k + 1)N/(2c_1)$, 
then there are at most $k+1$ rationals $p/q \in K$, so
\begin{align}
\sum_{\substack{p/q \,\in\, K \\ q > c_0\sqrt{N}}}\frac{1}{q^{s}} &\le
\sum_{c_0\sqrt{N} < q \le N/(2c_1)}\frac{1}{q^{s}} + 
\sum_{k \ge 1}\sum_{k \le 2c_1q/N < k+1}\frac{k+1}{q^s} \\
&\lesssim N^{(1 - s)/2} + N^{1 - s} \\
&\lesssim N^{(1 - s)/2}.
\end{align}
Now, interpolate the two above estimates: for $\theta=1/p$, $p>1$,
\begin{align}
\|\vc{E}\|_{p}&\lesssim \|\vc{E}\|_{1}^{\theta}\|\vc{E}\|_{\infty}^{1-\theta}\\
&\lesssim (N^{-s/2})^{\theta}(N^{\frac{1-s}{2}})^{1-\theta}\\
&=N^{-\frac{s}{2}+\frac{1-\theta}{2}}=N^{-\frac{s}{2}+\frac{1}{2p'}},
\end{align}
as desired.

For the range $0 < p < 1$  we use H\"older with $r = 1/p$ so that
$\norm{\vc{E}}_p=\norm{\vc{E}}_{L^p([0,1])} \le \norm{\vc{E}}_1 \lesssim N^{-s/2}$.
\end{proof}

It remains to bound the main term $\vc{M}$.
Notice that each interval $K$ with $|K|=\frac{2c_1}{N}$ contains at most 
one rational $p/q$ with $q \le c_0\sqrt{N}$ whenever 
\begin{equation}
\label{eq:condc0c1}
c_0 < (2c_1)^{-1/2}. 
\end{equation}
Indeed, this follows by contradiction, since $\big|\frac{p_1}{q_1}-\frac{p_2}{q_2}\big|\ge \frac{1}{q_1q_2}\ge \frac{1}{c_0^2N}$, assuming $q_1,q_2\le c_0\sqrt{N}$.
Hence, we can control the $L^p$ norm as
\begin{equation}
\norm{\vc{M}}_p^p = 
	\sum_K\sum_{\substack{p/q \,\in\, K \\ q\le c_0\sqrt{N}}}
	\frac{1}{q^{ps}}\abs{K} \lesssim 
\frac{1}{N}\sum_{\substack{p/q \\ q\le c_0\sqrt{N}}}
	\frac{1}{q^{ps}}\lesssim  
\frac{1}{N}\sum_{q\le c_0\sqrt{N}}\frac{1}{q^{ps-1}},
\end{equation}
where we used $\varphi(q) \le q$.
We compute the last sum to get
\begin{equation} \label{eq:Lp_norm_M_above}
\norm{\vc{M}}_p \lesssim
\begin{cases}
N^{-1/p}, &
	\textrm{if } p > 2/s, \\
N^{-s/2}(\log N)^{s/2}, &
	\textrm{if } p = 2/s, \\
N^{-s/2}, &
	\textrm{if } p < 2/s.
\end{cases}
\end{equation}
Inequality \eqref{eq:Lp_norm_M_above} and Lemma~\ref{thm:Error_U} imply the upper bound in 
Theorem~\ref{thm:Lp_norm_N}. 

%%%%%%%%%%%%%%%%%%%%%%%%
\subsection{Proof of Theorem \ref{thm:Lp_norm_N}: the lower bound}
%%%%%%%%%%%%%%%%%%%%%%%%

Before getting to the proof of the lower bound, 
we need a quite technical lemma that
says that the tail of $\phi$ can be safely ignored. 

Recall that for $q \le c_0\sqrt{N}$ with $c_0 \ll 1$, 
we denote by $I_{p/q}$ the interval of length $2c_1/N$
centered at $p/q$;
we assume that $c_0 < (2c_1)^{-1/2}$ to ensure that
the intervals are disjoint.
We pick a rational $p_0/q_0$ and 
define the error function
\begin{equation}
e(x) = 
	\sum_{p/q\, \notin \,2I_{p_0/q_0}}
	\frac{a_{q,\delta}}{q^s}\phi(N(x - p/q)),
\qquad x \in I_{p_0/q_0}.
\end{equation}
We will show that $e$,
which is the sum of the tails in $I_{p_0/q_0}$,
is small. 
The reader can skip the next lemma 
under the assumption $\supp \phi\subset [-c_1, c_1]$.

\begin{lem}[Tails are negligible]
	\label{thm:Tails_Negligible}
Let $s>2$ and $q_0 \le c_0\sqrt{N}$ for $c_0 \ll 1$. Then,
\begin{equation}
\abs{e(x)} \lesssim 
\frac{q_0^{s- 2}}{N^{s - 1}},
\qquad \textrm{for } x \in I_{p_0/q_0}.
\end{equation}
\end{lem}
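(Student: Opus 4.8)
The plan is to estimate $|e(x)|$ for $x \in I_{p_0/q_0}$ by exploiting the decay bound $|x|^\alpha|\phi(x)| \le c_2$ (for $|x| \ge c_1$, with $\alpha > s$) together with the fact that $x$ is far from every fraction $p/q$ appearing in the sum. First I would observe that if $x \in I_{p_0/q_0}$ and $p/q \notin 2I_{p_0/q_0}$, then $|x - p/q| \gtrsim |p_0/q_0 - p/q|$, and since $p/q \ne p_0/q_0$ we have the standard lower bound $|p_0/q_0 - p/q| \ge 1/(q q_0)$. In particular $N|x - p/q| \gtrsim N/(q q_0) \gtrsim c_1$ once $q \le N/(c q_0)$, which certainly holds in the regime we care about; more carefully one should split into the range where $N|x-p/q| \ge c_1$ (where the decay bound applies) and possibly absorb any finitely many close fractions — but since $q_0 \le c_0\sqrt N$ and the $I_{p/q}$ are disjoint, no other fraction lies in $2I_{p_0/q_0}$, so the tail decay bound is available for the entire sum.

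The core of the argument is then the estimate
\begin{equation}
\abs{e(x)} \lesssim \sum_{p/q\,\notin\,2I_{p_0/q_0}} \frac{1}{q^s}\,\frac{1}{(N|x - p/q|)^\alpha}
\lesssim \frac{1}{N^\alpha}\sum_{p/q} \frac{1}{q^s}\,\frac{1}{|x - p/q|^\alpha},
\end{equation}
using $|a_{q,\delta}| \simeq_\delta 1$. To handle the sum I would group the fractions by denominator $q$ and, for each fixed $q$, order the fractions $p/q$ by distance from $x$. Since $x$ is within $c_1/N$ of $p_0/q_0$, the nearest fraction with denominator $q$ is at distance $\gtrsim 1/(q q_0)$ (when $q \ne q_0$) or, if $q = q_0$, at distance $\gtrsim 1/q_0$; and the $j$-th nearest is at distance $\gtrsim j/q - O(1/(q q_0))$, so essentially $\gtrsim \max(1/(qq_0), j/q)$. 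Plugging this in, the contribution of denominator $q$ is
\begin{equation}
\frac{1}{q^s}\sum_{j\ge 1}\frac{1}{(\max(1/(qq_0), j/q))^\alpha}
\lesssim \frac{1}{q^s}\Big( (q q_0)^\alpha \cdot \frac{q_0}{q\cdot (1/q)} \;+\; q^\alpha\sum_{j\gtrsim q_0^{-1}} j^{-\alpha}\Big),
\end{equation}
which after simplification (using $\alpha > s > 2$, so the $j$-sum converges and is dominated by its first term $\simeq q_0^{\alpha-1}$) yields a bound $\lesssim q_0^{\alpha - 1} q^{\alpha - s}$. Summing over $q \ge 1$: the series $\sum_q q^{\alpha - s}$ diverges, so this crude per-$q$ bound is too lossy and I will need to be more careful — this is the main obstacle.

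The resolution is that one must not extend the inner sum over all $j \ge 1$ and all $q$ without restriction: for $q$ large the fractions $p/q$ near $x$ are dense, and the sum over them should be compared against an integral $\int |x - y|^{-\alpha}\,dy$ cut off at the appropriate scale, which converges (since $\alpha > 1$) and contributes $\lesssim (\text{cutoff})^{1-\alpha}$. The correct bookkeeping is: fractions at distance $\rho$ from $x$ with denominator $q$ number $\lesssim q\rho + 1$ in a window of size $\rho$, so $\sum_{p/q:\,|x-p/q|\simeq\rho} q^{-s} \lesssim (q\rho+1)q^{-s}$, and then summing over dyadic $\rho$ from $\simeq 1/N$ (the closest fraction with a given $q$, given $x\in I_{p_0/q_0}$, is actually at distance $\gtrsim 1/(qq_0) \ge 1/(c_0\sqrt N \cdot q)$) up to $\rho \simeq 1$ and over $q$ gives, after the dust settles, the claimed $q_0^{s-2}/N^{s-1}$. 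Concretely I expect the dominant contribution to come from the fractions closest to $p_0/q_0$ — namely those with moderate denominator $q \lesssim \sqrt N$ lying at distance $\simeq 1/(qq_0)$ — and the exponent $q_0^{s-2}N^{-(s-1)}$ should emerge by summing $q^{-s}(qq_0)^{-\alpha}N^{-\alpha}\cdot(qq_0)^{\alpha - 1}$-type terms, i.e. $N^{-\alpha}\sum_q q^{-s-1}q_0^{-1}$ balanced against the truncation. The bulk of the work is this careful dyadic decomposition; everything else (the disjointness of the $I_{p/q}$, the reduction to the decay bound, the use of $|a_{q,\delta}|\simeq 1$) is routine.
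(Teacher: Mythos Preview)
Your opening moves match the paper's: apply the tail decay of $\phi$ to reduce to the sum $N^{-\alpha}\sum_{p/q}q^{-s}|x-p/q|^{-\alpha}$, then use $|x-p/q|\gtrsim|p_0/q_0-p/q|\ge 1/(qq_0)$. The trouble is in the bookkeeping you propose after that. If for each fixed $q$ you use the count ``$\lesssim q\rho+1$ fractions at distance $\simeq\rho$'' and start the dyadic sum at $\rho\simeq 1/(qq_0)$, then the ``$+1$'' term---which is exactly the contribution of the single nearest fraction with denominator $q$---gets bounded by $(qq_0)^\alpha$ for \emph{every} $q$. Summing this over $q\lesssim N/q_0$ (where $\alpha>s$ makes the sum grow) gives
\[
N^{-\alpha}\,q_0^\alpha\!\!\sum_{q\lesssim N/q_0}\! q^{\alpha-s}\ \simeq\ N^{-\alpha}\,q_0^\alpha\,(N/q_0)^{\alpha-s+1}\ =\ \frac{q_0^{\,s-1}}{N^{\,s-1}},
\]
which is the claimed bound times an extra factor of $q_0$. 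Since $q_0$ can be as large as $c_0\sqrt N$, this weaker bound yields only $|e(x)|\lesssim N^{-(s-1)/2}$ at the top of the range, and when fed into the later estimate for $\|e\|_{L^p(U)}$ it produces a term of order $N^{-(s-1)/2}$, which dominates the main term $\|\vc M\|_{L^p(U)}\simeq N^{-s/2}$ and wrecks the lower bound argument. So the loss is not cosmetic.

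The missing idea is that the lower bound $|p_0/q_0-p/q|\ge 1/(qq_0)$ is sharp only for a $1/q_0$ proportion of denominators $q$. The true distance to the nearest fraction with denominator $q$ is $|\overline{qp_0}|/(qq_0)$, where $\overline{qp_0}$ denotes the residue of $qp_0$ modulo $q_0$ of smallest absolute value; since $(p_0,q_0)=1$, as $q$ runs over any block of $q_0$ consecutive integers the residues $\overline{qp_0}$ run once through all of $\{-\lfloor q_0/2\rfloor,\dots,\lfloor q_0/2\rfloor\}$. Exploiting this equidistribution recovers the missing factor of $q_0$ (one gets $\sum_{1\le r\le q_0/2}r^{-\alpha}\simeq 1$ inside each block instead of $q_0\cdot 1$). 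This is precisely the mechanism the paper uses: it rewrites the sum in terms of $|qp_0-q_0p|$, groups $q$ into blocks of length $q_0$, and splits into cases according to the size of $|\overline{qp_0}|$. Your geometric dyadic-in-$\rho$ scheme can be made to work, but only after you insert this arithmetic input; the phrase ``after the dust settles'' is hiding exactly the step where a naive count falls short by $q_0$.
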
 

\begin{proof}
Since $\abs{x - p/q} \ge c_1/N$ then
\begin{equation} \label{eq:Upper_e_1}
\abs{e(x)} \le C\frac{c_2}{N^\alpha}
	\sum_{\substack{p/q \\ p/q\, \notin \,2I_{p_0/q_0}}}
	\frac{1}{q^s}\frac{1}{\abs{x - p/q}^\alpha}.
\end{equation}
Moreover, we can write $x = p_0/q_0 + \delta x$, with $\abs{\delta x} \le c_1/N$, then
\begin{equation}
\Big|x-\frac{p}{q}\Big| \ge 
	\Big|\frac{p_0}{q_0}-\frac{p}{q}\Big|
		- \frac{c_1}{N}
\ge \frac{1}{2}\Big|\frac{p_0}{q_0}-\frac{p}{q}\Big|.
\end{equation}
We replace the above in \eqref{eq:Upper_e_1} so that
\begin{equation}
\abs{e(x)} \le 
	C\frac{q_0^\alpha}{N^\alpha}
	\sum_{\substack{p/q \\ p/q\, \notin \,2I_{p_0/q_0}}}
		\frac{q^{\alpha - s}}{\abs{qp_0 - q_0p}^\alpha}.
\end{equation}
Since $p/q \notin 2I_{p_0/q_0}$ is the same as
\begin{equation}
\abs{qp_0 - q_0p} \ge 2c_1\frac{q_0q}{N},
\end{equation}
then it is sensible to break the sum above as
\begin{equation}
\abs{e(x)} \le C\frac{q_0^\alpha}{N^\alpha}
	\sum_{k \ge 0}\;\sum_{\{q\,:\,k < 2c_1q_0q/N \le k + 1\}}\;
	q^{\alpha - s}\sum_{\{p\,:\,\abs{qp_0 - q_0p} > k\}} 
	\frac{1}{\abs{qp_0 - q_0p}^\alpha}.
	\label{eq:Upper_e_2}
\end{equation}

To estimate the very last sum in \eqref{eq:Upper_e_2},
let $\overline{a}$ denote a residue of $a$ mod $q_0$
such that $\abs{\overline{a}} \le q_0/2$, so
if $\abs{\overline{qp}_0} > k$ then 
$\{l\,:\,\abs{\overline{qp}_0 + lq_0}> k\} =
\Z$ and
\begin{equation}
\sum_{\{p\,:\,\abs{qp_0 - q_0p}> k\}} \frac{1}{\abs{qp_0 - pq_0}^\alpha} = 
\sum_{\{l\,:\,\abs{\overline{qp}_0 + lq_0}> k\}}
	\frac{1}{\abs{\overline{qp}_0 + lq_0}^\alpha}
\lesssim \frac{1}{\abs{\overline{qp}_0}^\alpha}
\qquad (\textrm{assume } \alpha > 1).
\end{equation}
If $\abs{\overline{qp}_0} \le k < q_0/2$ then
\begin{equation}
\sum_{\{l\,:\,\abs{\overline{qp}_0 + lq_0}> k\}} \frac{1}{\abs{\overline{qp}_0 + lq_0}^\alpha}
\lesssim \sum_{l \neq 0} \frac{1}{\abs{lq_0}^\alpha}
\le \frac{1}{q_0^\alpha}.
\end{equation} 
If $\abs{\overline{qp}_0} \le q_0/2 \le k$ then
\begin{equation}
\sum_{\{l\,:\,\abs{\overline{qp}_0 + lq_0}> k\}} \frac{1}{\abs{\overline{qp}_0 + lq_0}^\alpha}
\lesssim \sum_{l \ge k/q_0} \frac{1}{\abs{lq_0}^\alpha}
\lesssim \frac{1}{q_0 k^{\alpha - 1}}.
\end{equation}
Now we have to compute the contribution of each case
to the sum in \eqref{eq:Upper_e_2}

The contribution of the case $\abs{\overline{qp}_0} > k$ is less than
\begin{equation} \label{eq:e_contribution_k_smallest}
A_1 := \frac{q_0^\alpha}{N^\alpha}\sum_{0 \le k < q_0/2}\;
\sum_{\{q\,:\,k < 2c_1q_0q/N \le k + 1\}}
	\ind_{\{\abs{\overline{qp}_0} > k\}}(q)\frac{q^{\alpha - s}}{\abs{\overline{qp}_0}^\alpha}. 
\end{equation}  
The last sum in $q$ runs over an interval of length
$N/(2c_1q_0) \ge q_0$ if $c_0 \ll 1$
(recall that $q_0 \le c_0\sqrt{N}$), so
we can break it into a number $\simeq N/(2c_1q_0^2)$ of 
blocks $K$ of length $q_0$ so that
\begin{equation}
\sum_{\{q\,:\,k < 2c_1q_0q/N \le k + 1\}}
	\ind_{\{\abs{\overline{qp}_0} > k\}}(q)\frac{q^{\alpha - s}}{\abs{\overline{qp}_0}^\alpha} \le 
\sum_{K}\sum_{q \in K} \ind_{\{\abs{\overline{qp}_0} > k\}}(q)
\frac{q^{\alpha - s}}{\abs{\overline{qp}_0}^\alpha};
\end{equation}
see Figure \ref{fig:blocks}.

\begin{figure}[h]
\includegraphics[width=0.75\linewidth]{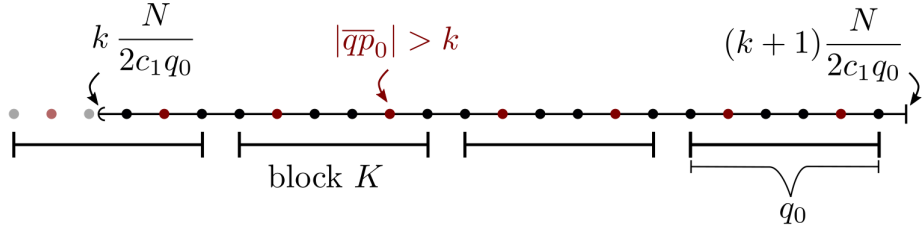}
\caption{Blocks $K$ of length $q_0$}
 \label{fig:blocks}
\end{figure}

\noindent Since $(p_0, q_0) = 1$, then $\overline{qp}_0$ runs
over all residues $r$ mod $q_0$, so
we may bound this case as
\begin{align}
\sum_{\{q\,:\,k < 2c_1q_0q/N \le k + 1\}}
	\ind_{\{\abs{\overline{qp}_0} > k\}}(q)\frac{q^{\alpha - s}}{\abs{\overline{qp}_0}^\alpha} &\lesssim
\Big((k+1)\frac{N}{c_1q_0}\Big)^{\alpha - s}\sum_K \sum_{\substack{r\in K\\\abs{r} > k}}
\frac{1}{\abs{r}^\alpha} \\
&\lesssim 
\frac{1}{q_0(k+1)^{s - 1}}\Big(\frac{N}{c_1q_0}\Big)^{\alpha + 1 - s}
\end{align}
We replace it in \eqref{eq:e_contribution_k_smallest} so that
\begin{equation}
A_1 \lesssim \frac{q_0^\alpha}{N^\alpha}
	\sum_{0 \le k < q_0/2}\;
	\frac{1}{q_0(k+1)^{s - 1}}\Big(\frac{N}{c_1q_0}\Big)^{\alpha + 1 - s}\lesssim \frac{q_0^{s- 2}}{N^{s - 1}}. \label{eq:A1}
\end{equation}

The contribution of the case $\abs{\overline{qp}_0} \le k < q_0/2$ is less than
\begin{align}
A_2 &:=
\frac{q_0^\alpha}{N^\alpha}
	\sum_{0 \le k < q_0/2}\sum_{\{q\,:\,k < 2c_1q_0q/N \le k + 1\}} 
	\ind_{\{\abs{\overline{qp}_0} \le k\}}(q)\frac{q^{\alpha - s}}{q_0^\alpha}	\\
&\lesssim 
\frac{1}{N^\alpha}
	\sum_{0 \le k < q_0/2}\Big[(k+1)\frac{N}{q_0}\Big]^{\alpha - s}
	\sum_{\{q\,:\,k < 2c_1q_0q/N \le k + 1\}}
	\ind_{\{\abs{\overline{qp}_0} \le k\}}(q).
\end{align}
We estimate the last sum in $q$ as before, 
breaking the sum into blocks of length $q_0$, so that
\begin{align}
A_2 &\lesssim
\frac{1}{N^sq_0^{\alpha - s}}
\sum_{0 \le k < q_0/2}(k+1)^{\alpha - s}
\sum_K \sum_{\substack{r\in K\\\abs{r} \le k}}
\ind_{\{\abs{\overline{qp}_0} \le k\}}(q) \\
&\lesssim
\frac{1}{N^sq_0^{\alpha - s}}
\sum_{0 \le k < q_0/2}(k+1)^{\alpha - s}(k+1)
\frac{N}{2c_1q_0^2} \\
&\lesssim \frac{1}{N^{s - 1}}.
\label{eq:A2}
\end{align}

The contribution of the case $k \ge q_0/2$ is less than
\begin{align}
A_3 &:= \frac{q_0^\alpha}{N^\alpha}\sum_{k \ge q_0/2}\;\sum_{\{q\,:\,k < 2c_1q_0q/N \le k + 1\}}\;
	\frac{q^{\alpha - s}}{q_0k^{\alpha - 1}} \\
&\lesssim
\frac{q_0^\alpha}{N^\alpha}\cdot
\frac{N^{\alpha + 1 - s}}{q_0^{\alpha + 2 - s}}
	\sum_{k \ge q_0/2}\frac{1}{k^{s - 1}} \\
&\lesssim \frac{1}{N^{s - 1}}.
\label{eq:A3}
\end{align} 

We sum up all the contributions 
\eqref{eq:A1}, \eqref{eq:A2} and \eqref{eq:A3} to 
the error term \eqref{eq:Upper_e_2} to find out
\begin{equation}
\abs{e(x)} \lesssim A_1 + A_2 + A_3 \lesssim 
\frac{q_0^{s- 2}}{N^{s - 1}},
\qquad \textrm{for } x \in I_{p_0/q_0},
\end{equation}
which is what we wanted.
\end{proof}

After this lemma, for each $p_0/q_0$ with $q_0 \le c_0\sqrt{N}$,
we can split $P_Nh_{\p,\delta}$ as
\begin{equation}
P_N h_{\textrm{p},\delta}(x) 
= \sum_{\substack{p/q\, \in \,2I_{p_0/q_0} \\ q \le c_0\sqrt{N}}}
	\frac{a_{q,\delta}}{q^s}\phi(N(x - p/q)) + 
	\sum_{\substack{p/q\, \in \,2I_{p_0/q_0} \\ q > c_0\sqrt{N}}}
	\frac{a_{q,\delta}}{q^s}\phi(N(x - p/q))
	+ e(x),
	\qquad\textrm{if } x \in I_{p_0/q_0}.
\end{equation}
Since the only fraction $p/q\in 2I_{p_0/q_0}$ with
$q \le c_0\sqrt{N}$, for $c_0 \ll 1$, is $p_0/q_0$ itself,
then we can write this decomposition as
\begin{align}
P_N h_{\textrm{p},\delta}(x) 
&= \frac{a_{q_0,\delta}}{q_0^s}\phi(N(x - p_0/q_0)) + 
	\sum_{\substack{p/q\, \in \,2I_{p_0/q_0} \\ q > c_0\sqrt{N}}}
	\frac{a_{q,\delta}}{q^s}\phi(N(x - p/q))
	+ e(x),
	\qquad\textrm{if } x \in I_{p_0/q_0}, \\
&=: \vc{M}(x) + \vc{E}(x) + e(x).
\label{eq:lowerB_decomposition}
\end{align}

%%%%%%%%%%%%%%%%%%%%
\subsubsection{The range $p > 2/s$}
%%%%%%%%%%%%%%%%%%%%

We estimate the $L^p$ norm of $\vc{M}$ by
integrating in the interval $I_0$,
that is, $p_0/q_0 = 0$, so
\begin{equation}
\norm{\vc{M}}_{L^p(I_0)} \gtrsim 
	N^{-1/p}.
\end{equation}
The $L^p$ norm of $\vc{E}$ is
\begin{equation}
\norm{\vc{E}}_{L^p(I_0)} \lesssim
	\frac{1}{N^{1/p}}\sum_{\substack{p/q \in 2I_0 \\ q > c_0\sqrt{N}}}
		\frac{1}{q^s}.
\end{equation}
Since $\abs{p/q}\le 2c_1/N$ then necessarily
$q \ge N/(2c_1) > c_0\sqrt{N}$, and
the number of fractions with denominator $q$ in
$2I_0$ is $\le 2c_1q/N$, so
\begin{equation}
\norm{\vc{E}}_{L^p(I_0)} \lesssim
\frac{1}{N^{1/p}}\sum_{q \ge N/(2c_1)}\frac{1}{Nq^{s - 1}} \lesssim N^{-1/p - s + 1}.
\end{equation}
By Lemma~\ref{thm:Tails_Negligible} we have
\begin{equation}
\norm{e}_{L^p(I_0)} \lesssim N^{-1/p -s + 1}.
\end{equation}
This leads us to the conclusion
\begin{equation}
\label{eq:lowerpg1}
\norm{P_N h_{\textrm{p},\delta}}_p \ge 
\norm{P_N h_{\textrm{p},\delta}}_{L^p(I_0)} \gtrsim
N^{-1/p},
\end{equation}
which proves the lower bound in Theorem~\ref{thm:Lp_norm_N} for the range $p>2/s$.
 
 %%%%%%%%%%%%%%%%%%%%
\subsubsection{The range $0<p \le 2/s$}
%%%%%%%%%%%%%%%%%%%%

The lower bound will be estimated by integrating
$P_Nh_{\p,\delta}$ over
\begin{equation} \label{eq:Set_I_Lower}
U := \bigcup_{\substack{p/q \\ q \le c_0\sqrt{N}}} I_{p/q}.
\end{equation} 
Since the intervals are pairwise disjoint (see \eqref{eq:condc0c1})
\begin{equation}
\label{eq:measU}
|U|= \sum_{\substack{p/q \\ q \le c_0\sqrt{N}}} |I_{p/q}|\le \frac{2c_1}{N}\sum_{\substack{p/q \\ q \le c_0\sqrt{N}}} 1\le  \frac{2c_1}{N}\sum_{ q \le c_0\sqrt{N}} \varphi(q)\le  \frac{2c_1}{N} \sum_{ q \le c_0\sqrt{N}}q\le 2c_0^2c_1.
\end{equation}

By Lemma~\ref{thm:Tails_Negligible} the $L^p$ norm of $e$ is small
\begin{equation} \label{eq:Lp_e}
\norm{e}_{L^p(U)}^p \lesssim 
\sum_{\substack{p/q \\ q \le c_0\sqrt{N}}}
\frac{q^{p(s- 2)}}{N^{p(s - 1)}}\abs{I_{p/q}}
\lesssim \frac{1}{N^{p(s - 1) + 1}}
	\sum_{q \le c_0\sqrt{N}}q^{p(s - 2) + 1} 
\lesssim 
c_0^{p(s - 2) + 2}N^{-ps/2}.
\end{equation}

In the decomposition \eqref{eq:lowerB_decomposition},
the $L^p$ norm of the main term is
\begin{align}
\norm{\vc{M}}_{L^p(U)} &\gtrsim
	\Big(\sum_{\substack{p/q \\ q \le c_0\sqrt{N}}}
	\int_{x \in I_{p/q}}
		\frac{1}{q^{ps}}\,\abs{\phi(N(x - p/q))}^p\,dx\Big)^{1/p}  \\
&\gtrsim \Big(\frac{1}{N}\sum_{q \le c_0\sqrt{N}}
	\frac{\varphi(q)}{q^{ps}}\Big)^{1/p},
\label{eq:Preliminary_lower_bound}
\end{align}
by using the properties of $\phi$.
We estimate the last sum in the next lemma.

\begin{lem}\label{thm:lower_Bound_Totient}
Let $0 < \alpha \le 2$ and $M \gg 1$. Then,
\begin{equation} \label{eq:thm:Totient_sum}
\sum_{1 \le q \le M}\frac{\varphi(q)}{q^\alpha} \gtrsim
\begin{cases}
\log M & \textrm{if } \alpha = 2, \\
M^{2 - \alpha} & \textrm{if } 0 < \alpha < 2.
\end{cases}
\end{equation}
\end{lem}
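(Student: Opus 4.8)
The plan is to estimate the sum $\sum_{1\le q\le M}\varphi(q)/q^\alpha$ from below by exploiting the well-known average behavior of the totient function, namely that $\varphi(q)$ is proportional to $q$ on average. The cleanest route is to use the classical identity $\varphi(q) = \sum_{d\mid q}\mu(d)\,q/d$, or equivalently $\sum_{q\le M}\varphi(q) = \sum_{d\le M}\mu(d)\sum_{m\le M/d} m \simeq \tfrac{3}{\pi^2}M^2$, which gives $\sum_{q\le M}\varphi(q)\gtrsim M^2$. From there, partial summation (Abel summation) against the weight $q^{-\alpha}$ transfers this into the desired lower bound. I would set $\Phi(M) := \sum_{q\le M}\varphi(q)$, record the lower bound $\Phi(M)\gtrsim M^2$ for $M\gg 1$, and then write
\begin{equation}
\sum_{1\le q\le M}\frac{\varphi(q)}{q^\alpha}
= \frac{\Phi(M)}{M^\alpha} + \alpha\int_1^M \frac{\Phi(t)}{t^{\alpha+1}}\,dt
\ge \alpha\int_1^M \frac{\Phi(t)}{t^{\alpha+1}}\,dt
\gtrsim \int_{M/2}^M \frac{t^2}{t^{\alpha+1}}\,dt,
\end{equation}
since $\Phi(t)\gtrsim t^2$ on the range of integration. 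Evaluating the last integral, $\int_{M/2}^M t^{1-\alpha}\,dt$, gives $\simeq M^{2-\alpha}$ when $0<\alpha<2$; when $\alpha=2$ one instead integrates $t^{-1}$ over a full dyadic range of length $\log M$ (i.e., $\int_1^M \Phi(t)t^{-3}\,dt\gtrsim \int_1^M t^{-1}\,dt\simeq\log M$), yielding the logarithmic bound. This handles both cases of the claim.

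Alternatively, and perhaps more elementarily for a self-contained writeup, I would avoid Abel summation entirely and argue by a dyadic pigeonhole: it is a standard fact that a positive proportion of integers $q$ in any dyadic block $[\lambda, 2\lambda]$ satisfy $\varphi(q)\ge c\lambda$ (for instance, all $q$ with $\varphi(q)/q$ bounded below, which have positive density — or even more simply, one can use that $\sum_{q\le M}\varphi(q)\gtrsim M^2$ forces, by an averaging argument, that $\varphi(q)\gtrsim q$ for $\gtrsim \lambda$ of the $q\in[\lambda,2\lambda]$). Then for each dyadic $\lambda \le M$,
\begin{equation}
\sum_{q\simeq\lambda}\frac{\varphi(q)}{q^\alpha}
\gtrsim \frac{1}{\lambda^\alpha}\sum_{\substack{q\simeq\lambda\\ \varphi(q)\gtrsim\lambda}}\varphi(q)
\gtrsim \frac{\lambda\cdot\lambda}{\lambda^\alpha} = \lambda^{2-\alpha},
\end{equation}
and summing the geometric-type series over dyadic $\lambda\le M$ gives $\gtrsim M^{2-\alpha}$ for $\alpha<2$ (the top scale dominates) and $\gtrsim \log M$ for $\alpha=2$ (every scale contributes $\simeq 1$, and there are $\simeq\log M$ of them).

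I expect the only real obstacle to be sourcing the input estimate $\sum_{q\le M}\varphi(q)\gtrsim M^2$ (equivalently, the positive density of $q$ with $\varphi(q)\gtrsim q$) cleanly. This is entirely classical — it follows from the Euler-product computation $\sum_{q\le M}\varphi(q) \sim \tfrac{3}{\pi^2}M^2$, or even from the trivial-for-our-purposes observation that $\varphi(q)\ge q\prod_{p\mid q}(1-1/p)$ together with the fact that $\prod_{p}(1-1/p)$ restricted to primes up to a fixed bound is a positive constant, so e.g. all $q$ coprime to $6$ (a set of density $1/3$) have $\varphi(q)\ge \tfrac13 q$. I would simply cite the standard reference (e.g., Hardy--Wright or Apostol) for $\sum_{q\le M}\varphi(q)\gtrsim M^2$ and then carry out whichever of the two summation arguments above is shorter; the partial-summation version is the most compact. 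Everything after the input estimate is a routine one-line integral or geometric-sum computation, with the $\alpha=2$ borderline being the only place one must be slightly careful to extract the logarithm rather than a bounded constant.
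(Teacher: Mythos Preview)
Your proposal is correct. Both routes you outline work, and your second (dyadic) approach is essentially what the paper does: the paper decomposes the sum into dyadic blocks $m/2 \le q \le m$, proves directly via the M\"obius identity $\varphi(q) = q\sum_{d\mid q}\mu(d)/d$ and the evaluation $\sum_{d\ge 1}\mu(d)/d^2 = 1/\zeta(2)$ that each block satisfies $\sum_{m/2\le q\le m}\varphi(q)/q = m/(2\zeta(2)) + \BigO(\log m)$, and then sums over dyadic $m\le M$. The only difference is that the paper derives the block estimate from scratch inside the proof, whereas you cite the classical asymptotic $\sum_{q\le M}\varphi(q)\sim \tfrac{3}{\pi^2}M^2$ as a black box.

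Your first route (Abel summation against $\Phi(M):=\sum_{q\le M}\varphi(q)$) is a genuinely different and slightly cleaner packaging: it avoids the explicit dyadic decomposition, and for $\alpha<2$ the boundary term $\Phi(M)/M^\alpha\gtrsim M^{2-\alpha}$ alone already gives the result. The paper's self-contained derivation buys independence from an external reference; your Abel-summation version buys brevity. One cosmetic point: in the $\alpha=2$ case you should integrate from a fixed $M_0\gg 1$ rather than from $1$, since $\Phi(t)\gtrsim t^2$ only holds for $t$ large---but this costs nothing, as the hypothesis is $M\gg 1$ and the lost piece $\int_1^{M_0}$ is $\BigO(1)$.
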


\begin{proof}
Recall the identity
\begin{equation} \label{eq:Totient_Identity}
\varphi(q) = q\sum_{d\mid q}\frac{\mu(d)}{d},
\end{equation}
where $\mu$ is the M\"obius function;
see \cite[Section 16.3]{hardyWright2008}.
 
Let $M_0 \gg 1$ and 
replace \eqref{eq:Totient_Identity} into
the left-hand side of \eqref{eq:thm:Totient_sum} so that
\begin{align}
\sum_{1 \le q \le M}\frac{\varphi(q)}{q^\alpha} 
	&\ge
 	\sum_{M_0 \le q \le M}\frac{\varphi(q)}{q^\alpha} \\
	&\simeq \sum_{M_0 \le m \le M}
	\frac{1}{m^{\alpha - 1}}
		\sum_{m/2 \le q \le m}
		\sum_{d\mid q} \frac{\mu(d)}{d},
\label{eq:sum_totient_dyadic}
\end{align}
where $m \in 2^\N$. 
For each dyadic block we have
\begin{align}
\sum_{m/2 \le q \le m}
		\sum_{d\mid q} \frac{\mu(d)}{d}
&= \sum_{d \ge 1} \frac{\mu(d)}{d} 
	\sum_{m/2 \le q \le m} 
		\ind_{d\mid q}(q) \\
	&= \sum_{1 \le d \le m} \frac{\mu(d)}{d}
		\sum_{m/(2d) \le k \le m/d} 1 \\
	&= \sum_{1 \le d \le m} 
	\frac{\mu(d)}{d}\Big(\frac{m}{2d}\Big) + 
		\sum_{1 \le d \le m} 
	\frac{\mu(d)}{d}
		\Big( - \frac{m}{2d} + \sum_{m/(2d) \le k \le m/d} 1\Big) 
\end{align}
Since $\abs{m/(2d) - \abs{\{k \mid m/(2d) \le k \le m/d\}}} \lesssim 1$
and $\abs{\mu(d)} \le 1$, then
\begin{align}
\sum_{m/2 \le q \le m}
	\sum_{d\mid q} \frac{\mu(d)}{d}
	&= \frac{m}{2}\sum_{1 \le d \le m} 
	\frac{\mu(d)}{d^2} + \BigO(\log m) \\
&= \frac{m}{2\zeta(2)} -
	\frac{m}{2}\sum_{d > m}\frac{\mu(d)}{d^2} + \BigO(\log m) \\
&= \frac{m}{2\zeta(2)} + \BigO(\log m).
\end{align}
Here, we used the identity
$\sum_{d \ge 1}\mu(d)/d^2 = 1/\zeta(2)$, where
$\zeta$ is the Riemann zeta function, see \cite[Theorem 287]{hardyWright2008}.

Going back to \eqref{eq:sum_totient_dyadic}, 
for $M_0 \gg 1$ we get
\begin{equation}
\sum_{1 \le q \le M}\frac{\varphi(q)}{q^\alpha} \gtrsim
	\sum_{M_0 \le m \le M}\frac{1}{m^{\alpha - 2}},
\end{equation}
which yields \eqref{eq:thm:Totient_sum}.
\end{proof} 

We apply Lemma~\ref{thm:lower_Bound_Totient}
to \eqref{eq:Preliminary_lower_bound} with $c_0 \ll 1$ to find out that for $N \gg 1$ we have
\begin{equation} \label{eq:M_LBound}
\norm{\vc{M}}_p \gtrsim \begin{cases}
N^{-s/2}(\log c_0 N)^{1/p}, & \textrm{if } p = 2/s \\
c_0^{2/p - s}N^{-s/2}, & \textrm{if } p < 2/s. 
\end{cases}
\end{equation}

It remains to bound the error term 
\begin{equation} \label{eq:def_tilde_E}
\vc{E}(x) = 
\sum_{\substack{p/q \in 2I_{p_0/q_0} \\ q > c_0\sqrt{N}}}
		\frac{a_{q,\delta}}{q^s}\phi(N(x - p/q)),
\qquad \textrm{if } x \in I_{p_0/q_0}.
\end{equation} 
To compute the $L^p$ norm 
we use H\"older with exponent $r = 1/p$ so that, in view of \eqref{eq:measU},
\begin{equation} \label{eq:E_Lp}
\norm{\vc{E}}_{L^p(U)} \le 
\abs{U}^{1/(pr')}
	\norm{\vc{E}}_{L^1(U)} 
\lesssim c_0^{2/p - 2}\norm{\vc{E}}_{L^1(U)}.
\end{equation}
The $L^1$ norm is
\begin{equation} \label{eq:E_L1_A}
\norm{\vc{E}}_{L^1(U)} \lesssim 
	\sum_{\substack{p'/q' \\ q' \le c_0\sqrt{N}}}\;
	\sum_{\substack{p/q \,\in\, 2I_{p'/q'}\\ q > c_0\sqrt{N}}}
	\frac{1}{q^s}\abs{I_{p'/q'}} \lesssim
\frac{1}{N}	\sum_{\substack{p/q \\ q > c_0\sqrt{N}}}\frac{1}{q^s}
	\sum_{\substack{p'/q' \\ q' \le c_0\sqrt{N}}}\ind_{\{\abs{p/q - p'/q'} \le 2c_1/N\}}(p'/q').
\end{equation}
The last sum in $p'/q'$ is at most one
because of the restriction $q' \le c_0\sqrt{N}$.
Indeed, if there were at least two, 
say $p_1/q_1, p_2/q_2$ with $\abs{p/q - p_i/q_i} \le 2c_1/N$, 
then $1/(q_1q_2)\le \abs{p_1/q_2 - p_1/q_2}\le 4c_1/N$ and $1/(q_1q_2)\ge 1/(c_0^2 N)$, 
which implies $c_0>1/(2c_1^{1/2})$, 
contradiction to \eqref{eq:condc0c1}.
If the last sum in $p'/q'$ is not empty (which happens when $q >\sqrt{N}/(2c_0c_1)$), then 
necessarily $p'/q' \neq p/q$, which implies that
$1 \le 2c_1qq'/N$ or 
$q \ge N/(2c_1q')\ge \sqrt{N}/(2c_0c_1)$.
Hence, 
\begin{equation}  \label{eq:tildeE_L1_easy}
\norm{\vc{E}}_{L^1(U)} \lesssim 
	\frac{1}{N}	\sum_{\substack{p/q \\ q \ge \sqrt{N}/(2c_0c_1)}}\frac{1}{q^s}\lesssim 
	\frac{1}{N}	\sum_{q \ge \sqrt{N}/(2c_0c_1)}\frac{\varphi(q)}{q^s}
\lesssim c_0^{s-2}N^{-s/2}.
\end{equation}
In Appendix~\ref{app:counting_rationals} we prove a better upper bound,
but for the present purposes this is enough.
We get thus
\begin{equation} \label{eq:Lp_E_final}
\norm{\vc{E}}_{L^p(U)} \lesssim
	c_0^{2/p + s-4}N^{-s/2}.
\end{equation}

We can now conclude the lower bound.
We have
\begin{equation}
\norm{P_N h_{\textrm{p},\delta}}_{L^p(U)} \ge 
	\norm{\vc{M}}_{L^p(U)} - \norm{\vc{E}}_{L^p(U)}
	- \norm{e}_{L^p(U)},
\end{equation}
so, when $p < 2/s$, from \eqref{eq:M_LBound}, \eqref{eq:Lp_E_final}
and \eqref{eq:Lp_e} we get
\begin{equation}
\norm{P_N h_{\textrm{p},\delta}}_p \ge 
\norm{P_N h_{\textrm{p},\delta}}_{L^p(U)} \gtrsim
c_0^{2/p}(c_0^{-s} - Cc_0^s - Cc_0^{s - 4})N^{-s/2}
\gtrsim N^{-s/2} 
	\quad (\textrm{by } s>2).
\end{equation}
For the critical exponent $p = 2/s$ we get (observe that the only logarithmic term below comes from $\vc{M}$)
\begin{equation}
\norm{P_N h_{\textrm{p},\delta}}_p \ge 
\norm{P_N h_{\textrm{p},\delta}}_{L^p(U)} \gtrsim N^{-s/2}(\log c_0N)^{s/2},
\end{equation}
which concludes the proof of Theorem~\ref{thm:Lp_norm_N}
when $0 < p \le 2/s$. 

\appendix

% !TeX root = frisch-parisi.tex

\section{Numerical Simulations}
    \label{app:numerical}

For a given signal/function, we calculate its spectrum of singularities $D(h)$ 
numerically using the \textit{wavelet transform modulus maxima (WTMM) method} 
implemented in MATLAB using the Wavelab 850 toolbox \cite{buckheit1995wavelab}. 
With a suitable choice of a wavelet, through the wavelet coefficients, 
we compute the partition function, scaling exponent $\eta(p)$ and thus, 
$D(h)$ is estimated using the Legendre transform (see \cite{turiel2006} 
for their precise definition as they are different from the ones mentioned earlier). 
The input parameters consist of the signal $X$ with length $N=2^J$, 
the number of scales, range of parameters $p$ and $h$. 
Thus, for $X=H_\delta$ in Figure \ref{fig:HdeltaRNDF}.B, 
we choose $J=13$, $\alpha = 0.7, 0.9$, $p\in[-5,5]$, $h\in [h_{\min}, h_{\max}]$, 
where $h_{\min}=0$, $h_{\max}=1/\alpha$, that is, the support of $H_\delta$, 
and the wavelet used is the first derivative of a Gaussian. 
To further compare them quantitatively, 
we calculate the error as defined in \cite[(32)]{turiel2006} and 
obtain the values 0.0956 and 0.1166 for $\alpha=0.7$ and $\alpha=0.9$, respectively. 
We notice that these results are indeed comparable with 
the ones obtained in \cite{turiel2006} and can be reduced further with a larger $N$.

Next, we estimate $D(h)$ for $R$ and $R_M$ in the context of vortex filament equation. 
More precisely, we consider the input signal $X$ as 
the trajectory of the third component (without the vertical height) of the $M$-sided filament curve \cite{hozVega2014}.
With $p\in[-5,5]$, $h_{\min}=0.4$ and $h_{\max}=0.8$, 
we plot the Riemann's function $R$ and
$R_M$, for $M=3, 5, 8, 15$, in Figure \ref{fig:HdeltaRNDF}.A. 
The plots show the estimated values of $D(h)$
where its maximum value varies with $M$ and converges to that of $R$ (circled points). Indeed, for $M=15$, the agreement is remarkable
and deviations from the theoretical values (starred points) are a result of a numerical error,
which is minimum when the wavelet chosen is the second derivative of a Gaussian.
The support of $D(h)$ in each case is very close to 0.25.

%!TEX root = frisch-parisi.tex

\section{Counting rationals}
	\label{app:counting_rationals}

In the next proposition 
we improve the upper bound 
 $\norm{\vc{E}}_{L^1(U)} \lesssim c_0^{s-2}N^{-s/2}$ 
we proved in \eqref{eq:tildeE_L1_easy}.

\begin{prop}
Let $U$ be the set \eqref{eq:Set_I_Lower} and
$\vc{E}$ the function \eqref{eq:def_tilde_E}. 
Then,
\begin{equation}
\norm{\vc{E}}_{L^1(U)} \lesssim
	c_0^sN^{-s/2}. 
\end{equation}
\end{prop}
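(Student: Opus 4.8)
The plan is to start from the first inequality in \eqref{eq:E_L1_A}, which, since $\abs{I_{p_0/q_0}}\simeq 1/N$, already reads
\[
\norm{\vc{E}}_{L^1(U)}\lesssim_\delta\frac1N\,S,\qquad S:=\sum_{\substack{p_0/q_0\\q_0\le c_0\sqrt N}}\ \sum_{\substack{p/q\in 2I_{p_0/q_0}\\q>c_0\sqrt N}}\frac1{q^s},
\]
so it suffices to show $S\lesssim_\delta c_0^sN^{1-s/2}$. In \eqref{eq:tildeE_L1_easy} the analogue of $S$ was estimated by swapping to a sum over $p/q$ and crudely bounding the number of admissible fractions with a given denominator $q$ by $\varphi(q)$; instead I would keep the double sum organized around the centers $p_0/q_0$ and exploit that every $p/q$ occurring in the inner sum lies in the \emph{very short} interval $2I_{p_0/q_0}$ of length $\simeq 1/N$.

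Fix a center $p_0/q_0$ with $q_0\le c_0\sqrt N$. Any $p/q$ in the inner sum is $\ne p_0/q_0$ (since $q>c_0\sqrt N\ge q_0$), so $\abs{p/q-p_0/q_0}\ge 1/(qq_0)$, and comparing this with the half-length $2c_1/N$ of $2I_{p_0/q_0}$ forces $q>N/(2c_1q_0)$. Grouping the denominators dyadically, $q\simeq\lambda$, I would use the elementary separation estimate that in any interval of length $\ell$ there are at most $4\lambda^2\ell+1$ reduced fractions with $\lambda\le q<2\lambda$ (two distinct such fractions differ by at least $1/(q_1q_2)>1/(4\lambda^2)$). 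With $\ell\simeq 1/N$ this bounds the count by $\lesssim\lambda^2/N+1$, and since $\lambda\gtrsim N/(2c_1q_0)\ge\sqrt N/(2c_0c_1)\gg\sqrt N$ — here $c_0\ll1$ is crucial — the term $\lambda^2/N$ absorbs the $+1$. Summing the geometric series over dyadic $\lambda$, convergent because $s=2(1+\delta)>2$, gives
\[
\sum_{\substack{p/q\in 2I_{p_0/q_0}\\q>c_0\sqrt N}}\frac1{q^s}\lesssim_\delta\frac1N\sum_{\lambda\gtrsim N/(2c_1q_0)}\lambda^{2-s}\lesssim_\delta\frac1N\Big(\frac N{q_0}\Big)^{2-s}=\frac{q_0^{s-2}}{N^{s-1}}.
\]

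It remains to sum over the centers, using that there are $\varphi(q_0)\le q_0$ reduced fractions with denominator $q_0$ and that $\sum_{q_0\le c_0\sqrt N}q_0^{s-1}\lesssim(c_0\sqrt N)^s=c_0^sN^{s/2}$ (valid since $s-1>0$):
\[
S\lesssim_\delta\sum_{q_0\le c_0\sqrt N}\varphi(q_0)\,\frac{q_0^{s-2}}{N^{s-1}}\le\frac1{N^{s-1}}\sum_{q_0\le c_0\sqrt N}q_0^{s-1}\lesssim_\delta c_0^sN^{1-s/2},
\]
which together with $\norm{\vc{E}}_{L^1(U)}\lesssim_\delta\frac1NS$ proves the proposition. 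The one genuinely delicate point is the dyadic separation step: one must check that the lattice-point ``$+1$'' in a short interval really is dominated by $\lambda^2/N$ throughout the range of denominators that occurs, and this is exactly where the smallness of $c_0$ enters. Everything else is routine, and the entire improvement over \eqref{eq:tildeE_L1_easy} — from $c_0^{s-2}$ down to $c_0^{s}$ — comes from replacing the crude bound $\#\{p:\dots\}\le\varphi(q)$ by this interval-localized count.
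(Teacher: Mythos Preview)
Your argument is correct. The approach, however, differs from the paper's. The paper decomposes \emph{both} denominator ranges dyadically, writing $q\simeq\mu\ge c_0\sqrt N$ and $q'\simeq\lambda<c_0\sqrt N$, and then counts pairs $(p/q,p'/q')$ satisfying $0<|q'p-qp'|\le 2c_1\lambda\mu/N$ by a gcd/B\'ezout argument (\`a la \cite[Proposition~4.2]{Pierce2021}): for fixed $q,q'$, each admissible value of $m=q'p-qp'$ has exactly $(q,q')$ representations, which yields $\lesssim\lambda\mu/N$ pairs $(p,p')$ and hence $\lesssim\lambda^2\mu^2/N$ pairs $(p/q,p'/q')$; summing $\mu^{2-s}\lambda^2/N^2$ over the dyadic scales gives $c_0^sN^{-s/2}$. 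You instead keep the centre $p_0/q_0$ fixed, use only the elementary Farey separation $|p_1/q_1-p_2/q_2|>1/(4\lambda^2)$ to bound the number of fractions with $q\simeq\lambda$ in $2I_{p_0/q_0}$ by $\lesssim\lambda^2/N$ (the ``$+1$'' being absorbed precisely because $\lambda\gtrsim\sqrt N/c_0$), sum the resulting geometric series in $\lambda$ from $\lambda\simeq N/q_0$, and finally sum $\varphi(q_0)\,q_0^{s-2}$ over $q_0\le c_0\sqrt N$. Your route is more elementary --- no representation--counting is needed --- and it makes transparent that the gain of $c_0^2$ over \eqref{eq:tildeE_L1_easy} comes from the outer sum $\sum_{q_0\le c_0\sqrt N}q_0^{s-1}\lesssim(c_0\sqrt N)^s$, that is, from retaining the dependence of the threshold $q\gtrsim N/q_0$ on the individual centre rather than replacing it by the worst case $q\gtrsim\sqrt N/c_0$. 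The paper's double--dyadic method is more symmetric in $(q,q')$ and may export more readily to related counting problems, but for the statement at hand your argument is both shorter and self--contained.
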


\begin{proof}
The $L^1$ norm is
\begin{equation} 
\norm{\vc{E}}_{L^1(U)} \lesssim 
\frac{1}{N}	\sum_{\substack{p/q \\ q > c_0\sqrt{N}}}\;
\sum_{\substack{p'/q' \\ q' \le c_0\sqrt{N}}}
	\frac{1}{q^s}
	\ind_{\{\abs{p/q - p'/q'} \le 2c_1/N\}}(p/q, p'/q').
\end{equation}
Now we break the ranges $q > c_0\sqrt{N}$ and
$q' \le c_0\sqrt{N}$ dyadically into
parameters $\lambda, \mu \in 2^\N$, respectively, so that
\begin{equation}
\norm{\vc{E}}_{L^1(U)} \lesssim \frac{1}{N}
	\sum_{\substack{\lambda < c_0\sqrt{N} \\ \mu \ge c_0\sqrt{N}}}
	\mu^{-s}\sum_{\substack{p/q,\; p'/q' \\ q \simeq \lambda,\; q' \simeq \mu}}\ind_{\{\abs{p/q - p'/q'} \le 2c_1/N\}}(p/q, p'/q').
\end{equation}
Equivalently, we have to count at most how many
pairs of rationals $(p/q, p'/q')$ satisfy
\begin{equation} \label{eq:fractions_couting}
0<\abs{q'p - qp'} \le 2c_1\frac{\lambda\mu}{N};
\end{equation}
for that, we use the arguments in \cite[Proposition~4.2]{Pierce2021}.

For $m \in \Z\setminus\{0\}$, 
the goal is to count how many representations 
has $m$ as $m = q'p - qp'$ with $0\le p < q$ and $0\le p' < q'$,
so let us fix $q$ and $q'$.
If $(q,q') = d$ then necessarily $m = d\tilde m$,
so let us clear out $d$ from the representation of $m$
and write $\tilde m = \tilde q'p - \tilde qp'$,
where $q' = d\tilde q'$, $q = d\tilde q$ and
$(\tilde q, \tilde q') = 1$.
Now assume that $\tilde q'p - \tilde qp' = \tilde q'r - \tilde qr'$,
or after reordering $\tilde q'(p - r) = \tilde q(p' - r')$.
This implies that $\tilde q \mid (p - r)$ so, for some $l\in \Z$, we have
$0\le r = p + l\tilde q < q$ (recall that $0\le p<q$ and $q=d\tilde{q}$) and then the number of
different $r$'s is at most $d$.
In conclusion, for every $m$ divisible by $d$ there
are $d$ representations $m = q'p - qp'$ with $0\le p < q$ and $0\le p' < q'$.

The above paragraph shows that,
for fixed $q$ and $q'$, 
the number of choice of pairs $(p,p')$ satisfying \eqref{eq:fractions_couting}
is $\le 2c_1\lambda\mu/N$,
so the total number of fractions satisfying \eqref{eq:fractions_couting} 
is $\lesssim 2c_1\lambda^2\mu^2/N$.
Notice that the collection of
representations is empty unless
$\lambda\mu \ge N/(2c_1)$, so
\begin{align}
\norm{\vc{E}}_{L^1(U)} &\lesssim \frac{1}{N^2}
	\sum_{\substack{\lambda < c_0\sqrt{N} \\ \mu \ge c_0\sqrt{N}}}
	\mu^{2 - s}\lambda^2\ind_{\{\lambda\mu \ge N/(2c_1)\}}(\lambda, \mu) \\
&= \frac{1}{N^2}
	\sum_{\lambda < c_0\sqrt{N}} \lambda^2
	\sum_{\mu \ge N/(2c_1\lambda)}\mu^{2 - s} 
\qquad (\textrm{by } N/(2c_1\lambda) \ge c_0\sqrt{N}
\textrm{ for } c_0 \ll 1)\\
&\lesssim \frac{1}{N^s}
	\sum_{\lambda < c_0\sqrt{N}} \lambda^s
\lesssim c_0^sN^{-s/2},
\end{align}
where we used that $\mu$ and $\lambda$ are dyadic. The proof is completed.
\end{proof}

\bibliographystyle{acm}
\bibliography{intermittency}

\end{document}